\newcommand{\KnotDia}{\mathcal{K}} %
\newcommand{\knotdia}{\kappa} %
\newcommand{\FlatKnotDia}{\mathscr{K}} %
\newcommand{\KnotShad}{\FlatKnotDia} %
\newcommand{\knotshad}{k} %
\newcommand{\knotgrowth}{\mu_K} %
\newcommand{\LinkDia}{\mathcal{L}} %
\newcommand{\linkdia}{\lambda} %
\newcommand{\LinkShad}{\mathscr{L}} %
\newcommand{\linkshad}{\ell} %
\newcommand{\MapClass}{\mathscr{M}} %
\newcommand{\PrimeLinkDia}{\mathcal{PL}} %
\newcommand{\primelinkdia}{p\lambda} %
\newcommand{\primelinkshad}{p\ell} %
\newcommand{\PrimeKnotDia}{\mathcal{PK}} %
\newcommand{\PrimeKnotShad}{\mathscr{PK}} %
\newcommand{\ArbSurf}{\Sigma} %
\newcommand{\Prb}{\mathbb{P}} %
\newcommand{\GraphOf}[1]{\Gamma(#1)} %
\DeclareMathOperator{\Aut}{aut} %
\newtheorem*{conjecture}{Conjecture} %
\newtheorem*{untheorem}{Theorem} %
\newtheorem{theorem}{Theorem} %
\newtheorem{proposition}[theorem]{Proposition} %
\newtheorem*{lemma*}{Lemma} %
\newtheorem{corollary}[theorem]{Corollary} %
\theoremstyle{definition} %
\newtheorem*{remark}{Remark} %
\newtheorem*{definition}{Definition} %
\begin{document}
\title[]{Asymptotic laws for random knot diagrams} \author{Harrison
  Chapman} \email{hchapman@math.uga.edu}
\address{Department of Mathematics\\
  University of Georgia, Athens GA}
\date{\today}

\begin{abstract}
  We study random knotting by considering knot and link diagrams as decorated, (rooted) topological maps on spheres and pulling them uniformly from among sets of a given number of vertices \(n\), as first established in recent work with Cantarella and Mastin. The knot diagram model is an exciting new model which captures both the random geometry of space curve models of knotting as well as the ease of computing invariants from diagrams.

  We prove that unknot diagrams are asymptotically exponentially rare, an analogue of Sumners and Whittington's landmark result for self-avoiding walks. Our proof uses the same key idea: We first show that knot diagrams obey a pattern theorem, which describes their fractal structure. We examine how quickly this behavior occurs in practice. As a consequence, almost all diagrams are asymmetric, simplifying sampling from this model. We conclude with experimental data on knotting in this model. This model of random knotting is similar to those studied by Diao et al., and Dunfield et al.
\end{abstract}
\maketitle

\section{Introduction}
\label{sec:intro}

\thispagestyle{plain}

There are many models for sampling random knots: Self avoiding lattice
walks~\cite{Sumners_1988}, random space polygons
\cite{Cantarella2015b,Cantarella2013}, random Chebychev polynomials
\cite{Cohen2015}, random braid words~\cite{Nechaev1996}, \emph{Petaluma}
\cite{EvenZohar2014}, \textit{etc}. In this paper we will discuss the
\emph{random diagram model} introduced in~\cite{Cantarella2015} under which
\emph{knot diagrams} are drawn uniformly from the set of all diagrams with a
given number of crossings. The random diagram model exciting as it combines both
the geometry of space curve models of knotting with combinatorial ease of
computing knot invariants from diagrams. There has been some work on sampling
random diagrams~\cite{Diao2012,Dunfield2014talk}, but without regard to any
specific distribution. As well, alternating knot and link diagrams have been
studied~\cite{Jacobsen2002,Schaeffer2004} but alternating knot and link diagrams
are a significantly restricted subclass of diagrams in general.

Many models of random knots, specifically those which are used for the emulation
of polymer strands, obey the conjecture of Frisch, Wasserman, and Delbr\"uck
that almost all such objects are \emph{knotted} or \emph{non-trivial}. Indeed,
it is a landmark result of Sumners and Whittington~\cite{Sumners_1988} that this
is true for self-avoiding polygon models of knotting. Their proof was adapted to
numerous other models of random knotting, but it was as-of-yet unknown whether
this behavior is exhibited in the random diagram model.

In~\cite{Cantarella2015}, together with Cantarella and Mastin we provide a
tabulation of all knot diagrams with crossing number up to 10. In this paper we
begin by considering a slightly different object, \emph{rooted diagrams}, which
have no symmetries. We are then able to prove that in the large crossing number
limit, knot diagrams behave similarly to rooted diagrams, so that our results
carry over from the simpler rooted model to the original.

After introducing the key topological and combinatorial objects of
interest, we discuss the motivating parallels that this model of
knotting shares with those in the literature: \emph{Pattern theorems},
which assure that desired substructure appears often in fixed classes
of objects. We show that a pattern theorem result holds for many
different classes of diagrams, and provide constructions. We show then
how this result implies the key theorems of the paper---that knot
diagrams (rooted or unrooted) are both knotted and asymmetric.

We conclude with some experimental results. Experimentally, symmetries
disappear rapidly (\textit{i.e.}\ there are very few even in 10-crossing
diagrams) so the rooted and unrooted numerics are close in the
majority of our data set. We then examine the probabilities of
different knot types and compare the behavior to different models of
knotting. We finish with evidence that our numerics are somehow
\emph{different} than those of lattice polygon models of knotting in the sense
that we obtain different limiting ratios of knot type appearances
(\emph{c.f.}~\cite{Rensburg2011}).

\section{Definitions}
\label{sec:prelimdefs}

\subsection{Knots, links, and tangles}
\label{sec:knotlinktangledef}

A \emph{link} is an isotopy class of embeddings of one or more circles into
\(S^3\). A \emph{knot} is an isotopy class of embeddings of exactly one circle
into \(S^3\). Both of the prior are considered up to \emph{ambient isotopy} of
the embedded circles; roughly, this means that during the isotopy between two
knots, the knot ``cannot pass through itself.'' A \emph{knot diagram} (resp.
\emph{link diagram}) is a generic immersion (in that all intersection points are
double points) of a circle (resp.\ any number of circles) into the (in our cases
oriented) sphere \(S^2\) together with over-under (\textit{i.e.}\ sign)
information at each double point. The study of links and knots is well known to
be equivalent to the study of link diagrams and knot diagrams up to the
\emph{Reidemeister moves}, shown in Figure~\ref{fig:reidemeister}, by a theorem
of Reidemeister~\cite{Reidemeister1948}.
\begin{figure}[h!]
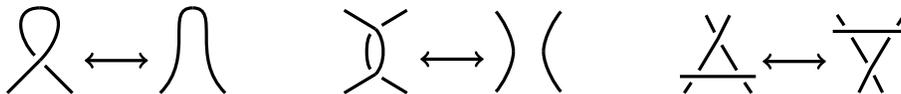

  \begin{tabular}{c@{\hspace{4em}}c@{\hspace{4em}}c}
    \centering
        \begingroup
        \input{#reidemeister_1}%
        \endgroup
    
    & %
        \begingroup
        \input{#reidemeister_2}%
        \endgroup
    
    & %
        \begingroup
        \input{#reidemeister_3}%
        \endgroup
    
  \end{tabular}
  \caption{The three Reidemeister moves.}
  \label{fig:reidemeister}
\end{figure}

The \emph{unknot} is the class of knots represented by the unit circle in
\(S^3\). A knot (or knot diagram) which represents the unknot is \emph{trivial}.
Knots and knot diagrams representing another class are \emph{knotted}. To
distinguish between space curves and their equivalence classes, we will use the
term \emph{knot type} to refer to a class of space curves, \emph{knots}.

A \emph{\(2k\)-tangle} is a generic immersion of \(k\) closed
intervals and any number of (possibly no) closed circles into \(B^3\)
so that precisely the \(2k\) interval ends all lie in the boundary. A
\emph{\(2k\)-tangle diagram} is a generic immersion of \(k\) closed
intervals and any number of closed circles into \(S^2\) together with
over-under information at each double point. In this paper, we will
only discuss tangle diagrams in which all \(2k\) ends of the intervals
lie in the same face of the sphere, so that the \(2k\)-tangle diagram
may be viewed as being an immersion into the disk \(D^2\) with exactly
the \(2k\) interval ends lying in the boundary circle. A \emph{strand}
of a tangle is any one circle or interval.

\subsection{Topological maps}
\label{sec:topmapdefs}

Diagrams are considered up to ``embedded graph isomorphism.'' This
precisely means that the viewpoint we should have is that of
\emph{topological maps} on surfaces.


\begin{definition}
  A \emph{map with \(n\) vertices} \(M\) is a graph \(\GraphOf M\) embedded on a
  (usually oriented) surface \(\ArbSurf\) so that every connected component of
  \(\ArbSurf \setminus M\) is a topological disk. The connected components of
  \(\ArbSurf \setminus M\) are called the \emph{faces} of \(M\). If \(\ArbSurf\)
  is the oriented sphere, then the map \(M\) is \emph{planar}. As each face must
  be a disk, maps' underlying graphs are necessarily connected.

  A map \(M\) is \emph{4-regular}, \emph{4-valent}, or \emph{quartic} if every
  vertex in the underlying graph \(\GraphOf M\) has degree 4.
\end{definition}

Maps admit a decomposition \(M = (V, E, F)\) into vertices, edges, and faces,
which is equivalent to the data of a graph together with an embedding.

In this paper we will only consider planar maps, although by considering maps on
any oriented surface of arbitrary genus and applying the ideas of this work one
arrives at the study of \emph{virtual} diagrams.

Symmetry complicates the study of maps. A strategy to avoid this issue
is to \emph{root} the map by picking and directing a single edge:

\begin{definition}
  A \emph{rooted map} is a map together with a single edge marked with
  a direction, called a \emph{root edge}.

  An automorphism of a rooted map \(M\) would be required to fix the
  root edge, its direction, and the orientation of the surface near
  the root; hence \(\Aut(M)\) is the trivial group.
\end{definition}

\begin{figure}[h!]
  \centering %
        \begingroup
        \input{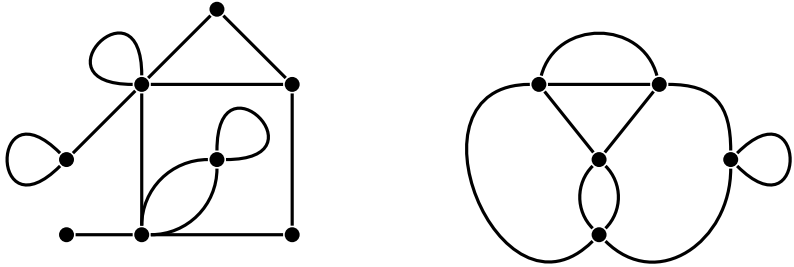}%
        \endgroup
    
  \caption{Two planar maps. The map on the right is in the class of
    knot (or link) shadows.}
  \label{fig:egplanarmaps}
\end{figure}

Maps have a well defined notion of \emph{dual map}; a map
\(M = (V, E, F)\) has dual \(M^* = (F, E^*, V)\), where there is an
edge \((f_1, f_2) \in E^*\) if \(f_1\) is adjacent to \(f_2\) in \(M\)
(faces are adjacent if they share an edge on their boundaries). The
dual graph of a 4-regular map is a \emph{quadrangulation}, \textit{i.e.}\ a map
for which every face has four bounding edges. A map is \emph{simple}
if it contains no parallel edges or self loops (its underlying graph
is simple). Given a rooted map \(M\), its dual is rooted as
follows. Let \(\rho\) be the root edge of \(M\) pointing from \(v_1\)
to the root vertex \(v_2\) be adjacent to the face \(f_1\) and the
root face \(f_2\). Then \((f_1, f_2)\) is the dual root edge and
directed from \(f_1\) to \(f_2\), and \(f_2\), \(v_2\) are the dual
root vertex and root face, respectively (and the dual of a dual rooted
map is the original rooted map).

Maps have a notion of substructure, which our pattern theorems will dictate:
\begin{definition}
  A map \(P\) with distinguished exterior face of \(k\)-edges is a
  \emph{submap} of a larger (possibly rooted) map \(M\) if there
  exists a cycle of \(k\) (possibly repeated) edges in \(M\) so that
  one of the two halves of \(M\) separated by the cycle is identical
  to \(P\).
\end{definition}

\subsection{Diagrams and shadows}
\label{sec:shadowdefs}

\begin{definition}
  A \emph{map decorated by a set \(S\)}, \((M, s)\) is a (possibly unrooted) map
  \(M = (V, E, F)\) together with a mapping \(s: V \to S\) which associates
  to each vertex of \(M\) an element of \(S\).
\end{definition}
We can now rephrase the definitions of diagrams using the vocabulary
of maps.
\begin{definition}
  A \emph{link shadow with \(n\) crossings} is a 4-regular planar map of \(n\)
  vertices. We will denote by \(\LinkShad_n\) the set of all \(n\)-crossing link
  shadows. Figure~\ref{fig:egplanarmaps} shows a general planar map, and a link
  shadow.

  A \emph{link diagram with \(n\) crossings} is a 4-regular
  planar map decorated with \(S = \{+,-\}\). This is equivalent
  to making a choice of over-under strand information at each
  vertex, as demonstrated in figure~\ref{fig:overunder}. We will denote the set of \(n\)-crossing link diagrams by
  \(\LinkDia_n\).
\end{definition}
Shadows and diagrams may be \emph{rooted} by taking additionally an edge
together with a choice of direction, as in Figure~\ref{fig:egrootedplanarmaps}.
From here on, maps, shadows, and diagrams will be assumed rooted unless
otherwise noted.

\begin{figure}[htbp]
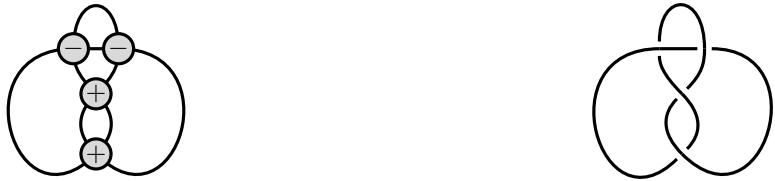

  \centering
  \begin{subfigure}[hbtp]{0.4\textwidth}
    \centering
        \begingroup
        \input{#figure-8-big-dshad}%
        \endgroup
    
    \caption{A figure-eight diagram viewed as a decorated shadow.}
  \end{subfigure}
  \hfil
  \begin{subfigure}[hbtp]{0.4\textwidth}
    \centering
        \begingroup
        \input{#figure-8-big}%
        \endgroup
    
    \caption{A figure-eight diagram viewed as a knot drawing.}
  \end{subfigure}
  \caption{After choosing once and for all a way to view signs as
    ``over-under'' information (\textit{i.e.}\ orientation around the
    knot), knot diagrams can be drawn as usual.}
  \label{fig:overunder}
\end{figure}

In order to be consistent with knot theory terminology, we will use
the word \emph{crossings} to refer to the vertices of shadows and
diagrams.

\begin{remark}
  This definition has then that unrooted link shadows and link
  diagrams are nearly the same objects as in the
  prequel~\cite{Cantarella2015}. There are two differences:
  \begin{enumerate}
  \item Our unrooted shadows (and diagrams) are on the \emph{oriented
      sphere}, as opposed to the unoriented sphere.
  \item Our unrooted shadows (and diagrams) do not come with any
    ``consistent'' choice of edge direction (\textit{i.e.}\ an orientation of
    link components).
  \end{enumerate}
  Notice that as there are at most 4 and at least 1 ``oriented shadows
  on the oriented sphere'' to each ``unoriented shadow on the
  unoriented sphere'', we get that asymptotically any results for our
  objects hold for those examined in the prequel.
\end{remark}

Indeed, \(\LinkShad\) is just another name for the class of 4-regular
planar maps counted by vertices; furthermore, the class of rooted
planar quadrangulations is dual to \(\LinkShad\).  Hence, the class
\(\LinkShad\) of link shadows has been counted
exactly~\cite{Brezin1978,Tutte1963}: If \(\linkshad_n = |\LinkShad_n|\),
then:
\[
  \linkshad_n = \frac{2(3^n)}{(n+2)(n+1)}\binom{2n}{n} \mathop{\sim}\limits_{n \to \infty} \frac{2}{\sqrt\pi}12^{n}n^{-5/2}.
\]
From this the exact counts of link diagrams can be determined as
well. If \(\linkdia_n = |\LinkDia_n|\), then
\[
  \linkdia_n = 2^n\linkshad_n =
  \frac{2^{n+1}(3^n)}{(n+2)(n+1)}\binom{2n}{n}
  \mathop{\sim}\limits_{n \to \infty} \frac{2}{\sqrt\pi}24^{n}n^{-5/2}.
\]

\begin{figure}[h!]
  \centering %
        \begingroup
        \input{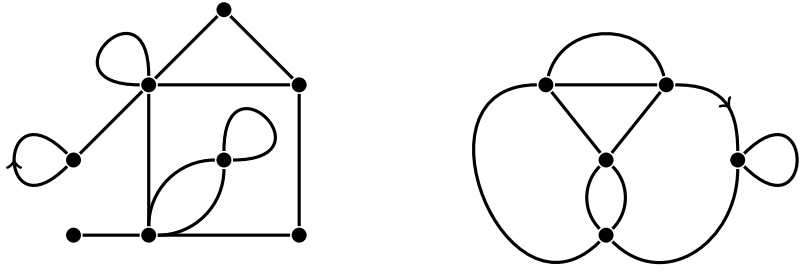}%
        \endgroup
    
  \caption{Two rooted planar maps. The map on the right is in the
    class of rooted knot shadows.}
  \label{fig:egrootedplanarmaps}
\end{figure}

Knot diagrams are the subset of link diagrams that have precisely one
``link component:''

\begin{definition}
  A \emph{link component} of a link shadow or
  diagram \(D\) is an equivalence class of edges modulo meeting across
  a vertex in \(D\).

  A \emph{knot shadow}\footnote{Kauffman calls these
  \emph{knot universes}~\cite{Kauffman87}.} is a link shadow which
  consists of precisely one link component. The class of knot shadows
  with \(n\) crossings is denoted by \(\KnotShad_n\).

  A \emph{knot diagram} is a link diagram which
  consists of precisely one link component. The class of knot diagrams
  with \(n\) crossings is denoted by \(\KnotDia_n\).
\end{definition}

Knot shadows \(\KnotShad_n\) represent a curious, small subclass of
\(\LinkShad_n\). Indeed, exact counts for
\(\knotshad_n = |\KnotShad_n|\) and \(\knotdia_n = |\KnotDia_n|\) are
not known except by experiments and
conjectures~\cite{Schaeffer2004}

\begin{conjecture}[Schaeffer-Zinn Justin 2004]
  There exist constants \(\knotgrowth\) and \(c\) such that
  \[
    \frac{\knotdia_n}{2^n} = \knotshad_n \mathop{\sim}\limits_{n \to \infty} c\knotgrowth^n \cdot n^{\gamma - 2},
  \]
  where
  \[
    \gamma = -\frac{1 + \sqrt{13}}{6},
  \]
  and \(\knotgrowth \approx 11.4\ldots\).
\end{conjecture}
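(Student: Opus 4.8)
\section{Proof proposal}

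The statement is a prediction from the theory of two--dimensional quantum gravity rather than something provable by elementary combinatorial means, so the plan is to obtain the two constants by different routes: the growth rate $\knotgrowth$ from an exactly solvable model, and the exponent $\gamma$ from the conformal field theory governing that model's critical point. The plan is to realize $\KnotShad$ inside the $O(N)$ \emph{fully packed loop model} on random $4$--regular maps. At each crossing the four incident edges pair up into two strands that pass straight through; tracing these strands decomposes any link shadow into a disjoint family of immersed closed curves, and a knot shadow is exactly the configuration in which this family is a single loop. I would therefore introduce the bivariate generating function
\[
  F(x,N) = \sum_{M \in \LinkShad} x^{v(M)} N^{L(M)},
\]
where $v(M)$ counts crossings and $L(M)$ counts strand--loops, so that the knot--shadow counts are extracted by
\[
  \sum_n \knotshad_n\, x^n = \left.\frac{\partial F}{\partial N}\right|_{N=0}.
\]

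The first main step is to write $F$ as the planar (large matrix size) limit of an $O(N)$--type matrix integral with a quartic potential, and to solve this model by the loop equations / resolvent method, obtaining the location $x_c(N)$ of the dominant singularity of $x \mapsto F(x,N)$ together with its local type. This pins down the exponential rate: $\knotgrowth = 1/x_c$, where $x_c = \lim_{N \to 0} x_c(N)$ is the limit of the model's critical line, and the numerical value $\knotgrowth \approx 11.4$ should emerge from the explicit solution.

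The exponent is the genuinely hard part. At its critical point the loop model of fugacity $N$ sits in a known universality class of a $\mathfrak{c}$--matter conformal field theory coupled to Liouville gravity, and the power--law correction to $\knotgrowth^n$ is controlled by the \emph{string susceptibility} exponent through the KPZ relation
\[
  \gamma_{\mathrm{str}} = \frac{\mathfrak{c} - 1 - \sqrt{(1-\mathfrak{c})(25-\mathfrak{c})}}{12}.
\]
A short computation shows that the target value $\gamma = -(1+\sqrt{13})/6$ is precisely $\gamma_{\mathrm{str}}$ at central charge $\mathfrak{c} = -1$; as a consistency check, $\mathfrak{c} = 0$ (pure gravity) gives $\gamma_{\mathrm{str}} = -1/2$ and hence the $n^{-5/2}$ law for all link shadows recorded above. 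So the claim reduces to identifying the single--loop ($N \to 0$) critical theory of the fully packed model as the $\mathfrak{c} = -1$ theory, and checking that differentiating in $N$ shifts the susceptibility exponent by the expected unit, turning $n^{\gamma_{\mathrm{str}} - 2}$ into the stated $n^{\gamma - 2}$. Finally I would transfer the analytic information back to coefficients by a Flajolet--Odlyzko singularity analysis to conclude $\knotshad_n \sim c\,\knotgrowth^{\,n}\, n^{\gamma - 2}$.

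The principal obstacle, and the reason this remains a conjecture, is that neither the Coulomb--gas/KPZ identification of $\mathfrak{c}$ nor the analytic continuation of $F(x,N)$ to the single--loop limit $N \to 0$ is rigorous. The $N \to 0$ limit is a polymer--type limit taken at a value of the fugacity where the matrix model is not explicitly solvable, and justifying that the non--analytic part of $F$ survives the operation $\partial_N|_{N=0}$ with exactly the claimed exponent is precisely what current techniques cannot establish. A rigorous proof would require either an independent combinatorial decomposition of $\KnotShad$ yielding a tractable functional equation, or a mathematically controlled version of the loop--model continuation.
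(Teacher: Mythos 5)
You were asked to prove a statement that the paper itself does not prove and does not claim to prove: it is labeled as a \emph{conjecture} (attributed to Schaeffer and Zinn-Justin), and the surrounding text states explicitly that the counts \(\knotshad_n\) are ``not known except by experiments and conjectures.'' The conjecture appears in the paper only as background motivating the form of asymptotic growth; the paper's actual theorems (pattern theorem, smooth growth, knottedness, asymmetry) deliberately avoid needing it. So there is no proof in the paper to compare yours against, and no referee could accept your proposal as closing the problem --- nor do you claim it does.

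Judged as a reconstruction of the heuristic behind the conjecture, your proposal is accurate and your consistency checks are right: realizing \(\KnotShad\) as the single-loop (\(N\to 0\)) sector of a fully packed \(O(N)\) loop model on random quartic maps is the route Schaeffer and Zinn-Justin take, and the KPZ arithmetic works out, since at central charge \(\mathfrak{c}=-1\)
\[
  \gamma_{\mathrm{str}}
  = \frac{-1-1-\sqrt{2\cdot 26}}{12}
  = \frac{-2-2\sqrt{13}}{12}
  = -\frac{1+\sqrt{13}}{6},
\]
while \(\mathfrak{c}=0\) gives \(\gamma_{\mathrm{str}}=-1/2\), matching the rigorously known \(12^n n^{-5/2}\) law for all rooted link shadows quoted earlier in the paper. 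The genuine gap is exactly where you place it: the identification of the \(N\to 0\) critical theory with \(\mathfrak{c}=-1\), the validity of KPZ for this observable, the analytic continuation in \(N\) and the survival of the singular part under \(\partial_N|_{N=0}\), and the transfer from singularity type to coefficient asymptotics are all non-rigorous steps, which is precisely why the statement remains open. Two cautions if you keep this write-up: do not present the matrix-model step as yielding \(\knotgrowth\approx 11.4\) ``from the explicit solution,'' since the model is not solved at the relevant point (the value is itself an estimate, from the same paper's numerics); and note that your claimed exponent bookkeeping under \(\partial_N\) (a ``unit shift'' turning \(n^{\gamma_{\mathrm{str}}-2}\) into \(n^{\gamma-2}\)) needs care, because in the conjecture as stated the exponent \(\gamma\) \emph{is} \(\gamma_{\mathrm{str}}(\mathfrak{c}=-1)\) in the rooted-map convention, not a shifted version of it.
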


This conjecture is of similar flavor to conjectures of the asymptotic counts of
self avoiding lattice walks, which are also of the
form~\cite{Madras2013,Rensburg2011}
\[
  a_n \mathop{\sim}\limits_{n\to\infty} c\lambda^n{}n^{\alpha}.
\]
Indeed, a large number of different combinatorial models exhibit this type of
growth; see Flajolet and Sedgewick~\cite{Flajolet2009} for examples.

Tangle diagrams may also be viewed in the language of maps:

\begin{definition}
  A \emph{\(2k\)-tangle shadow} is a map embedded on a sphere which is
  \(4\)-valent except for one distinguished ``external'' vertex of
  degree \(2k\). A \emph{\(2k\)-tangle diagram} is an unrooted
  \(2k\)-tangle shadow decorated (at non-exterior vertices) with signs
  \(\{+,-\}\).

  A tangle shadow (resp.\ diagram) \(T\) is \emph{contained} in a link
  shadow (diagram) \(D\) if the dual of \(T\), \(T^*\) (with exterior face the dual of
  the exterior vertex), is a submap of \(D^*\); for diagrams it is furthermore
  required that the signs of the crossings agree.
\end{definition}

We will view the exterior vertex of tangles as being ``at infinity''
so that tangle shadows and diagrams appear to be \(4\)-valent
decorated maps with loose exterior edges called \emph{arcs} or
\emph{legs}, as shown in Figure~\ref{fig:tangleex}.

\begin{figure}[hbtp]
  \centering
  \begin{subfigure}[hbtp]{0.3\textwidth}
    \centering %
        \begingroup
        \input{#6-tangle-bdyvert}%
        \endgroup
    
  \end{subfigure}\hfil
  \begin{subfigure}[bhtp]{0.3\textwidth}
    \centering %
        \begingroup
        \input{#6-tangle}%
        \endgroup
    
  \end{subfigure}
  \caption{A 6-tangle diagram with boundary vertex (left) and boundary
    vertex viewed as disk boundary (right).}
  \label{fig:tangleex}
\end{figure}

Rooted diagrams (and shadows) may be viewed as directed \emph{long curves} or
\emph{two-leg diagrams} (resp. shadows) by cutting the root edge into two
directed half edges with one pointing towards its vertex (the \emph{hind leg})
and one away (the \emph{front leg}) as in Figure~\ref{fig:treflegs2}. Two-leg
diagrams are rooted 2-tangle diagrams.

\begin{figure}[h!]
  \centering
        \begingroup
        \input{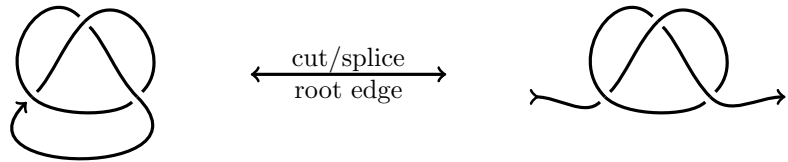}%
        \endgroup
    
  \caption{A rooted diagram of a trefoil, and its equivalent two-leg
  diagram}
  \label{fig:treflegs2}
\end{figure}

It will be handy to view diagrams in a manner similar to
pdcodes~\cite{Mastin2015} or combinatorial maps~\cite{Coquereaux16}. A diagram
with \(n\) vertices is, equivalently, a triple\footnote{This is a different
  decomposition of a map than the \((V, E, F)\) decomposition.} \((A, E, C)\) of arcs, edges, and
crossings where \(A\) consists of \(4n\) \emph{arcs} (sometimes called
\emph{half-edges} or \emph{flags}\footnote{The notation of flag indicates that
  arcs belong not only to a unique crossing and edge, but also to a unique face,
  although this observation is not necessary here.}),
\(2n\) unordered pairs of arcs (the edges), and \(n\) cyclic quadruples of arcs
together with a choice of sign (the crossings), up to renaming of arcs, so that
each arc appears in exactly \emph{one} edge and \emph{one} crossing. This
defines the embedding of a decorated graph into a surface; the diagram is planar
if this surface is the sphere. In this view, a root is a choice of arc. If \(a\)
is an arc in \(D\), define \(e(a)\) to be the unique edge containing \(a\) and
\(c(a)\) to be the unique crossing which contains \(a\).

We may view tangle diagrams in this manner; tangle diagrams are
triples \((A, E, C)\) where some ``exterior'' arcs \emph{do not belong
  to any edge} and \emph{live in the same face}.




A key strength of the framework proposed in this paper is the
simplicity with which the tools presented can be applied to other
interesting classes of diagram objects. To demonstrate this, we will
provide constructions for ``prime'' and ``reduced'' classes of
diagrams as well;
\begin{definition}
  A shadow or diagram \(D\) is \emph{prime} if it has more than \(1\)
  vertex and is not \(2\)-edge-connected, \textit{i.e.}\ there is no way to
  disconnect \(\GraphOf D\) by removing 2 edges. A shadow or diagram
  which is not prime is \emph{composite}. A rooted shadow or diagram
  is \emph{two-leg-prime} if it cannot be disconnected by removing two
  edges, \textit{one being the root edge}.

  A shadow or diagram \(D\) is \emph{reduced} if it has no
  disconnecting vertices (\textit{i.e.}, isthmi).
\end{definition}
It is important to note that prime knot diagrams can represent knot types which
are \emph{composite}. The condition of being a prime knot diagram is purely
graph-theoretic.


\begin{figure}[h!]  \centering
  \begin{subfigure}[hbtp]{0.45\textwidth}
    \centering
        \begingroup
        \input{#41_31_cs_2lp}%
        \endgroup
    
    \caption{A rooting which is two-leg-prime.}
    \label{fig:2lprime}
  \end{subfigure}
  \hfil
  \begin{subfigure}[hbtp]{0.45\textwidth}
    \centering
        \begingroup
        \input{#41_31_cs_2lcomp}%
        \endgroup
    
    \caption{A rooting which is not two-leg-prime.}
    \label{fig:2lcomp}
  \end{subfigure}
  \caption{Depending on the rooting, a composite shadow may be
    two-leg-prime.}
  \label{fig:prime2legprime}
\end{figure}

Again, the counts of prime link shadows and prime link diagrams are
known precisely. Exact counts are known from their bijection with
simple quadrangulations~\cite{Albenque2014};
\[
  \frac{\primelinkdia_n}{2^n} = \primelinkshad_n = \frac{4(3n)!}{n!(2n + 2)!}.
\]
The counts for prime knot diagrams are again unknown.

\subsection{Composition of tangles}
\label{sec:comptangles}

Key arguments that we discuss in this paper involve the ability to
compose two diagrams in order to produce new diagrams with certain
properties. We will describe this by first discussing methods of
composition for tangles and then providing equivalences between
diagrams and tangles.

Given two tangle diagrams $T=(A_T,E_T,C_T), S=(A_S,E_S,C_S)$ with respective
exterior arcs $\{t_1,t_2,\hdots,t_{2k}\}$ and
$\{s_1,s_2,\hdots,s_{2\ell}\}$ and a collection of unordered pairings
\[\mu =
\{(t_{i_1},s_{j_1}),(t_{i_2},s_{j_2}),\hdots,(t_{i_r},s_{i_r})\}\]
where each \(t_i\) and \(s_i\) appears at most once, define the
\((2k+2\ell-2r)\)-tangle diagram \(T\#_\mu{}S\) to be the tangle diagram
\[ T\#_\mu{}S = (A_T\cup{}A_S,E_T\cup{}E_S\cup{}\mu,C_T\cup{}C_S). \]
Given an arbitrary choice of \(\mu\), it is possible that
\(T\#_\mu{}S\) is non-planar or that not all exterior arcs of
\(T\#_\mu{}S\) lie on the same face. We will define and use some
specific tangle compositions which avoid these pitfalls.

\subsubsection{Connect summation}
\label{sec:csdefn}

Analogous to the connect sum operation on knots in space, there is a
notion of \emph{diagram connect sum} on diagrams, described in
\cite{Cantarella2015}, definition 11. While the definition there is
technically for oriented diagrams and shadows (pd-codes are \textit{a priori}
oriented), connect sum for unoriented objects can be defined by first
picking an orientation for each summand. While there are four choices
in total while picking orientations, observe that there are only two
different ways to connect sum two diagrams.

\begin{definition}
  Given 2-tangle diagrams \(T, S\) with exterior arcs \(\{t_1,t_2\}\),
  \(\{s_1, s_2\}\) and a pair of arcs \((t_i,s_j)\) with
  \(i,j \in \{1,2\}\), there is a complementary pair of arcs
  \((t_{i'},s_{j'})\) with \(i \ne i'\) and \(j \ne j'\). So define
  the \emph{connect sum} of the head \(t_i\) of \(T\) to the tail
  \(s_j\) of \(S\);
  \[T\#_{(t_i,s_j)}S = T\#_{\{(t_i,s_j),(t_{i'},s_{j'})\}}S.\]

  Given a diagram \(D = (A, E, C)\) and an edge \(e = (ab)\) in \(D\), define
  the 2-tangle \(D \setminus e\) to be the 2-tangle diagram
  \((A, E\setminus \{e\}, C)\) with exterior arcs \(a\) and \(b\).

  A diagram and a choice of arc \((D,a)\) is equivalently a 2-tangle
  diagram \((D\setminus\{e(a)\}),a\). So given a
  2-tangle diagram and a choice of ``tail'' exterior arc \((T,b)\),
  define the \emph{connect sum} of \(T\) to \(D\) by \(a,b\) to be
  \[ D\#_{(a,b)}T = (D\setminus{}e(a))\#_{(a,b)}T. \]
\end{definition}
An example of this is given in Figure~\ref{fig:csexample}

\begin{figure}[hbtp]
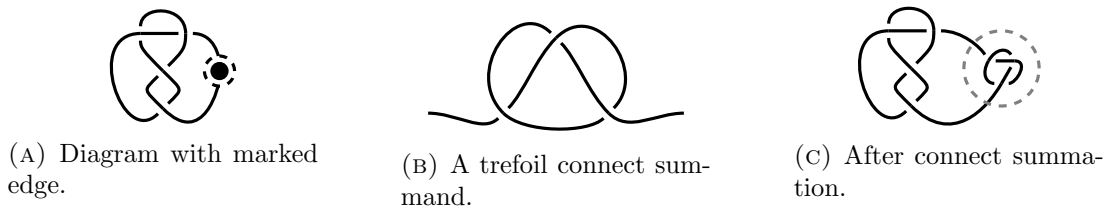

  \centering
  \begin{subfigure}[hbtp]{0.25\textwidth}
    \centering
        \begingroup
        \input{#figure-8-mkedge}%
        \endgroup
    
    \caption{Diagram with marked edge.}
  \end{subfigure}
  \hfil
  \begin{subfigure}[hbtp]{0.25\textwidth}
    \centering
        \begingroup
        \input{#trefoil_summand}%
        \endgroup
    
    \caption{A trefoil connect summand.}
    \label{fig:trefcs}
  \end{subfigure}
  \hfil
  \begin{subfigure}[hbtp]{0.25\textwidth}
    \centering
        \begingroup
        \input{#figure-8-cs-tref}%
        \endgroup
    
    \caption{After connect summation.}
  \end{subfigure}
  \caption{Connect sum of a trefoil into a figure-eight knot.}
  \label{fig:csexample}
\end{figure}

Link components behave predictably under connect summation: If
\(T, S\) have \(m,n\) closed circle components respectively, then
\(T\#S\) has exactly \(m+n-1\) closed circle components (independent of
choice of head and tail for the connect summation). It follows then
that if \(D\) is a knot diagram and \(T\) is a tangle diagram with no
closed circle components that \(D\#T\) is always itself a knot
diagram.

\subsubsection{Cyclic composition}
\label{sec:moretangcomp}

We can extend this definition to tangles with additional exterior
arcs, but we will then focus on tangles with precisely four.

\begin{definition}
  Given \(2k\)-tangle diagrams \(T, S\) with external arcs
  \[\{t_1,\hdots,t_{2k}\}, \{s_1,\hdots,s_{2k}\},\] choices
  \(t_{i_1}\) and \(s_{j_1}\) of arcs induce ordered tuples
  \(t_{i_1},\hdots,t_{i_{2k}}\) and \(s_{j_1},\hdots,s_{j_{2k}}\) by
  enumerating the external arcs of \(T\) counterclockwise and \(S\)
  clockwise, starting with the chosen arcs. Then with
  \(\mu = \{(t_{i_1},s_{j_1}),\hdots,(t_{i_{2k}},s_{j_{2k}})\}\),
  define the composition to be the link diagram
  \[ T\#_{(t_{i_1},s_{j_1})}S = T\#_\mu{}S. \]
  By this choice of \(\mu\), as \(T\) and \(S\) are both planar, so
  too is the composition \(T\#_\mu{}S\).
\end{definition}

The number of components in the resulting diagram will depend (in
addition to the count of closed loop components) on the ordering of
the external arcs of \(T, S\) as well as the precise matching. If
however we
are dealing with tangles of four exterior arcs, we can be
assured the existence of \(\mu\) which can guarantee some control over
the number of link components of the resultant diagram.

Consider the case where \(T, S\) are 4-tangle diagrams with no closed
loop components. For each tangle, color one strand red and one strand
blue. Suppose that \(T\) has its exterior arcs (ordered
counterclockwise) \(\{t_i\}_{i=1}^4\) colored
red-red-blue-blue. There exists some choice of composition of \(T\#S\)
so that \(T\#S\) consists of precisely one link component: Suppose
first then that \(S\) also has its arcs (ordered clockwise)
\(\{s_i\}_{i=1}^4\) colored red-red-blue-blue. Then the composition
\(T\#_{(t_1,s_1)}S\) consists of one link component. If instead \(S\)
had its arcs colored red-blue-red-blue, then any composition \(T\#S\)
can be seen to be precisely one link component.

Notice then that given a knot diagram \(D\), taking a crossing \(x\),
ignoring its sign, and designating it as the boundary vertex of a
tangle produces the 4-tangle denoted \(D \setminus x\) (there is some
abuse of notation here; we are actually removing both the crossing
\(x\), the four arcs in \(x\), and the edges in which those arcs
resided from \(D\)). Furthermore, as \(D\) is a knot diagram and
furthermore planar, \(D \setminus x\) must have its exterior arcs
colored red-red-blue-blue as in the case of \(T\) above. Hence given
any 4-tangle \(S\) there exists at least one way to compose
\((D \setminus x)\#S\) so that it is a planar knot diagram.

\section{Asymptotic structure theorems for diagrams}
\label{sec:structure}

\subsection{The Frisch-Wasserman-Delbr\"uck conjecture}
\label{sec:fwdconj}

The study of \emph{random} knotting arises in numerous areas, principal among
which is polymer physics: Polymers (such as DNA or proteins) are
considered to be strings in space and in many cases their function (or
lack thereof) depends on any ``knots'' that appear
within~\cite{Rawdon2012,Rawdon2015}. The \emph{random diagram model}
of random knotting is then: Given a number \(n\) of crossings, sample
uniformly an unlabeled knot diagram with \(n\) crossings and return
its knot type. It is similar to models of~\cite{Diao2012}
and~\cite{Dunfield2014talk}, but these models do not sample from any
well-understood measure on spaces of knot diagrams.

In the context of DNA topology, Frisch and Wasserman~\cite{Frisch1961} and
Delbr\"uck~\cite{Delbruck1962} independently conjectured;
\begin{conjecture}[Frisch-Wasserman 1962, Delbr\"uck 1961]
  As the size \(n\) of a randomly sampled knot grows large, the
  probability that it is knotted tends to 1.
\end{conjecture}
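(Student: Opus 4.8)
The plan is to follow the template of Sumners and Whittington: isolate a local tangle pattern whose presence forces nontriviality, and then invoke a pattern theorem guaranteeing that all but exponentially few diagrams contain that pattern. Concretely, I would reduce the statement to a bound on the number \(u_n = |\UnkDia_n|\) of \(n\)-crossing unknot diagrams, showing \(u_n \le C\,\knotgrowth^{(1-\delta)n}\) for some \(\delta>0\), while the total count \(\knotdia_n\) grows like \(\knotgrowth^n\) up to polynomial factors. Dividing, this gives \(\Prb(\text{unknot}\mid n)\to 0\), and hence \(\Prb(\text{knotted})\to 1\) exponentially fast, which is a far stronger statement than the conjecture requires.

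For the knotting pattern, let \(\TrefSum\) be a \(2\)-tangle diagram of a trefoil summand, as in Figure~\ref{fig:trefcs}. For any knot diagram \(D\) and any edge \(e=(ab)\), the connect sum \(D\#_{(a,b)}\TrefSum\) is again a knot diagram, since \(\TrefSum\) has no closed-circle components, and its knot type is the connect sum of the type of \(D\) with the trefoil. Because the monoid of knot types under connect sum has the unknot as its only unit and admits no nontrivial inverses, any knot diagram containing \(\TrefSum\) as a connect summand represents a knotted knot. Equivalently, an unknot diagram can contain \emph{zero} copies of \(\TrefSum\) as a \(2\)-tangle summand. This is the crucial observation that links knottedness to a purely combinatorial count of subpatterns.

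The pattern theorem then closes the argument. I would establish that there is a constant \(c>0\) for which the number of \(D\in\KnotDia_n\) containing fewer than \(cn\) disjoint copies of \(\TrefSum\) is at most \(C\,\knotgrowth^{(1-\delta)n}\). Since every unknot diagram contains \(0 < cn\) copies, the class \(\UnkDia_n\) is contained in this exponentially small set, which yields the desired bound on \(u_n\) and completes the proof. Note that I only need the pattern to be \emph{forbidden} in unknots (zero occurrences), so any positive density threshold \(cn\) suffices.

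The main obstacle is the pattern theorem itself, which I would prove by the Kesten--Madras machinery adapted to maps. First I would show that \(\knotdia_n\) is essentially supermultiplicative: connect-summing an \(m\)-crossing and an \(n\)-crossing knot diagram along chosen edges produces an \((m+n)\)-crossing knot diagram by the connect-sum lemma of Section~\ref{sec:comptangles}, so that \(\knotdia_{m+n}\ge c'\,\knotdia_m\,\knotdia_n\) and Fekete's lemma delivers a well-defined growth constant \(\knotgrowth\) (even though its exact value is conjectural). Then the core estimate is that diagrams in which \(\TrefSum\) is forbidden from appearing at positive density are exponentially penalized: one defines a surgery that inserts \(\TrefSum\) at a controlled positive fraction of the \(2n\) edges and compares the two counts. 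The delicate points are (i) ensuring each insertion preserves both planarity and the single-component condition, which the composition lemmas of Section~\ref{sec:comptangles} guarantee, and (ii) controlling overcounting so the surgery map is injective enough to convert the density of available insertion sites into a genuine exponential gap \(\knotgrowth^{(1-\delta)n}\). This final injectivity and entropy bookkeeping is where the real work lies.
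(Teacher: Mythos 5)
Your proposal is correct and follows essentially the same route as the paper: a trefoil $2$-tangle connect summand as the knotting pattern (whose absence characterizes a class containing all unknot diagrams), a pattern theorem proved by an insertion/surgery argument with injectivity bookkeeping (the paper's ``viable attachment'' conditions, fed into Bender--Wormald--Richmond's machinery), and supermultiplicativity of knot diagram counts under connect sum to get the growth constant via a Fekete-type lemma. The only cosmetic difference is that you invoke polynomial corrections to $\knotdia_n \sim \knotgrowth^n$, which are conjectural; but as in the paper, mere existence of the growth constant suffices for the exponential ratio bound.
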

The first proof of the conjecture was for \(n\)-step self-avoiding lattice
polygons, a landmark result by Sumners and Whittington~\cite{Sumners_1988}:
\begin{untheorem}[Sumners-Whittington 1988]
  As the number of steps \(n\) of a self-avoiding lattice polygon
  grows large, the probability that the polygon is knotted tends to 1
  exponentially quickly.
\end{untheorem}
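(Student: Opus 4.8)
The plan is to reuse exactly the two-step architecture that this paper advertises for diagrams: first establish a \emph{pattern theorem} for the underlying combinatorial family (here, self-avoiding polygons on the cubic lattice), and then exhibit a single local configuration whose appearance forces the whole object to be knotted. Write $p_n$ for the number of self-avoiding polygons of length $n$. By Hammersley's concatenation argument these grow like $\mu^n$, where $\mu$ is the connective constant, so that $p_n^{1/n} \to \mu$; this is the exact analogue of the exponential growth rate $\knotgrowth$ appearing for knot shadows.

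First I would invoke Kesten's pattern theorem for self-avoiding walks (extended to polygons): if $P$ is any \emph{proper} pattern, meaning a finite sub-walk that occurs in the interior of some self-avoiding walk in a way that permits it to be concatenated with itself, then there is a constant $\epsilon > 0$ and a $\nu < \mu$ such that the number of length-$n$ polygons containing fewer than $\epsilon n$ translated copies of $P$ is at most $\nu^n$. Thus all but an exponentially small fraction of polygons contain at least one copy of $P$. This is precisely the role played in this paper by the pattern theorem for diagrams.

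Next, and this is where the topology enters, I would construct a proper pattern $P_0$ that is a tightly knotted arc: a short self-avoiding configuration lying inside a ball $B$, meeting $\partial B$ in exactly its two endpoints, and such that capping those endpoints with a trivial arc in $B$ yields a trefoil. The pair $(B, P_0 \cap B)$ is then a nontrivial ball-arc pair. If a polygon $\pi$ contains $P_0$ and if $\partial B$ meets $\pi$ in exactly those two points, then $\partial B$ decomposes the knot type of $\pi$ as a connect sum one of whose factors is the trefoil; by additivity of knot genus under connect sum (genus of the trefoil being $1$), $\pi$ cannot be the unknot. The delicate part, and the main obstacle, is precisely guaranteeing the two-point intersection: one must design $P_0$ with enough of a buffer that self-avoidance alone forbids the remainder of the polygon from re-entering $B$ through $\partial B$, so that a single occurrence genuinely isolates a prime summand rather than allowing the apparent knot to ``cancel'' against the rest of the strand.

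Combining the two ingredients then closes the argument. By the pattern theorem, all but a fraction at most $(\nu/\mu)^n \to 0$ of the $p_n$ polygons of length $n$ contain at least one copy of $P_0$, and every such polygon is knotted by the connect-sum argument above. Hence the probability that a uniformly chosen length-$n$ self-avoiding polygon is unknotted is bounded by $(\nu/\mu)^n$, which tends to $0$ exponentially fast, as claimed. The entire difficulty is concentrated in the topological step of turning one local occurrence of $P_0$ into a global nontrivial summand; the combinatorial counting is then immediate from Kesten's theorem.
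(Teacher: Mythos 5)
This theorem is not proved in the paper at all---it is quoted as a landmark result of Sumners and Whittington, and the paper's only account of its proof is the remark that it ``makes use of Kesten's pattern theorem,'' followed by the insertion of knotted local structure, which is precisely the two-step architecture you lay out. Your sketch (Kesten's pattern theorem extended to polygons, a tight trefoil ball--arc pattern engineered so that self-avoidance seals off the ball, and genus additivity under connect sum to rule out the unknot) is the same approach as the cited original proof, so it is correct in outline and consistent with the paper's description.
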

Shortly thereafter the conjecture was proved in view of other models
of space curves; self-avoiding Gaussian polygons
(\cite{Douglas1994}), self-avoiding equilateral polygons
(\cite{Diao1995}), \textit{etc.}

As mentioned, the primary purpose of this work is to ascertain that the
Frisch-Wasserman-Delbr\"uck (FWD) conjecture holds in our model;
\begin{theorem}
  \label{thm:knotted}
  As the number of crossings \(n\) of a randomly sampled knot diagram
  grows large, the probability that the diagram is knotted tends to 1
  exponentially quickly.
\end{theorem}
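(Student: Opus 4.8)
The plan is to mirror the Sumners--Whittington strategy: pair a pattern theorem for the class $\KnotDia$ with a single local ``knotting'' pattern whose mere appearance forces the underlying knot type to be nontrivial. The pattern theorem guarantees that any realizable pattern occurs in all but an exponentially small fraction of $n$-crossing knot diagrams, so it suffices to exhibit one pattern that (i) is realizable inside $\KnotDia$ and (ii) forces knottedness, and then to feed the exponential rarity through directly.

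First I would take the knotting pattern $P$ to be a trefoil connect-summand, namely the $2$-tangle obtained by cutting one edge of a standard trefoil diagram as in Figure~\ref{fig:trefcs}. Since $P$ has no closed-circle components, the observation following the definition of connect summation shows that $D\#_{(a,b)}P$ is again a knot diagram for every knot diagram $D$; in particular $P$ is realizable as a sub-tangle of some knot diagram, which is exactly the nonvacuousness hypothesis the pattern theorem requires. The topological input that makes one copy enough is additivity of the Seifert genus, $g(K_1\#K_2)=g(K_1)+g(K_2)$, together with the fact that $g(K)=0$ precisely for the unknot: because the trefoil has genus $1$, any knot type carrying a trefoil connect-summand has genus at least $1$ and is therefore knotted.

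It remains to assemble these observations. Writing $\PatCount_P(D)$ for the number of occurrences of $P$ in $D$, every diagram in $\KnotDia_n$ with $\PatCount_P(D)\geq 1$ represents a knotted type, so, with $c>0$ the constant provided by the pattern theorem, the trivial diagrams satisfy
\[
  \{D\in\KnotDia_n : D \text{ trivial}\}\ \subseteq\ \{D : \PatCount_P(D)=0\}\ \subseteq\ \{D : \PatCount_P(D)<cn\}.
\]
The pattern theorem asserts that the rightmost set has exponential growth rate strictly below that of $\KnotDia_n$, whence the proportion of trivial diagrams decays exponentially and the knotting probability tends to $1$ exponentially fast.

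Once the pattern theorem is granted, the main obstacle is the bridge between combinatorics and topology: I must ensure that a combinatorial occurrence of $P$ as a sub-tangle, in the sense of the submap/containment definition, genuinely realizes a trefoil connect-summand of the knot type. I would handle this by checking that the separating $2$-edge cut associated with such an occurrence is precisely the pair of edges along which the connect summation of Section~\ref{sec:csdefn} is performed, so that the region it bounds is a ball meeting the knot in a trefoil tangle; additivity of genus then applies without change, and property (ii) follows.
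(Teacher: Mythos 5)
Your proposal is correct and takes essentially the same route as the paper: the paper also deduces Theorem~\ref{thm:knotted} from its pattern theorem for rooted knot diagrams (Proposition~\ref{thm:patternthm}) applied to exactly the trefoil connect-summand $2$-tangle of Figure~\ref{fig:trefcs}, so that unknotted diagrams sit inside the exponentially rare set of diagrams containing no copy of the pattern. The only cosmetic differences are that you make the topological step explicit via additivity of Seifert genus (the paper treats ``a connect summand that is a nontrivial prime knot forces knottedness'' as a given), and that the paper additionally records the transfer from rooted to unrooted diagrams, which costs at most a factor of $4n$ and so preserves the exponential decay.
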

This result will follow from the fractal structure of knot diagrams. We will
prove results for rooted knot diagrams which extend (asymptotically) to the
unrooted case as there are always at least 1 and at most \(4n\) rooted diagrams
to every unrooted diagram. Asymptotically, any \emph{numerical} results on
rooted diagrams apply up to a factor of \(4n\) for the unrooted diagrams as;

\begin{theorem}
  \label{thm:asymm}
  As the number of crossings \(n\) of an (unrooted) knot diagram grows
  large, the probability that the diagram has a nontrivial
  automorphism group tends to 0 exponentially quickly.
\end{theorem}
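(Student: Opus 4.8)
The plan is to show that symmetric unrooted knot diagrams are exponentially outnumbered by establishing a ``half-size'' bound on their number via a quotient construction, and then to pass to the unrooted count through the rooting correspondence. Write $\knotdia_n = |\KnotDia_n|$ for the number of rooted $n$-crossing knot diagrams, and let $N_n$ and $N_n^{\mathrm{sym}}$ denote the number of unrooted $n$-crossing knot diagrams and the number of those with nontrivial automorphism group. A rooting is a choice of one of the $4n$ arcs, and (orientation-preserving) automorphisms act freely on arcs, so every unrooted diagram admits at most $4n$ rootings; hence $\knotdia_n \le 4n\,N_n$ and the probability to be estimated satisfies $N_n^{\mathrm{sym}}/N_n \le 4n\,N_n^{\mathrm{sym}}/\knotdia_n$. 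It therefore suffices to prove that $N_n^{\mathrm{sym}}$ is exponentially smaller than $\knotdia_n$. I will also use the connect-sum operation of Section~\ref{sec:comptangles}: composing two rooted knot diagrams produces a rooted knot diagram and is finite-to-one, so $\knotdia_{2m} \ge c\,\knotdia_m^2$ for a constant $c>0$; consequently $\knotdia_n^{1/n}$ converges to some $\mu \ge 2$ and $\knotdia_{\lceil n/2\rceil}/\knotdia_n \to 0$ exponentially fast.

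The core estimate is $N_n^{\mathrm{sym}} \le \mathrm{poly}(n)\cdot\knotdia_{\lceil n/2\rceil}$, which I would obtain by quotienting. Let $D$ be an unrooted knot diagram with $\Aut(D)$ nontrivial and choose $\phi \in \Aut(D)$ of prime order $p$. Since $\phi$ is an orientation-preserving homeomorphism of $\ArbSurf = S^2$ of finite order, it is conjugate to a rotation: it fixes exactly two points $q_1,q_2$ and acts freely on $\ArbSurf \setminus \{q_1,q_2\}$. The quotient $\bar D = D/\langle\phi\rangle$ is then a sign-decorated $4$-valent map on the sphere carrying two distinguished orbifold points, the images of $q_1$ and $q_2$, with $\lceil n/p\rceil \le \lceil n/2\rceil$ crossings up to an additive constant accounting for any crossing situated at a fixed point (which forces $p=2$). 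Crucially, the single immersed circle underlying $D$ is preserved setwise by $\phi$, so $\phi$ induces a finite-order self-map of its $S^1$ parameter; this map is a rotation or a reflection, and in either case its quotient is connected. Thus $\bar D$ has exactly one underlying component, so it is a knot-diagram-type object (a one-component decorated shadow, or a single decorated strand when $\phi$ reverses the curve) equipped with two marked points.

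To finish, note that $D$ is recovered injectively from the pair $(\bar D, g)$, where $g$ is the finite gluing datum recording the type of each orbifold point (vertex, edge midpoint, or face center) and how the $p$ sheets are glued around it; there are only polynomially many such $g$. The number of one-component sign-decorated $4$-valent maps on the sphere with two marked points and at most $\lceil n/2\rceil + O(1)$ crossings is bounded by $\mathrm{poly}(n)\cdot\knotdia_{\lceil n/2\rceil}$, since marking two among the $O(n)$ cells costs only a polynomial factor over the corresponding (rooted) knot-diagram count. Summing over the at most $n$ admissible primes $p$, whose $p=2$ term dominates, gives $N_n^{\mathrm{sym}} \le \mathrm{poly}(n)\cdot\knotdia_{\lceil n/2\rceil}$. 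Combining with the first paragraph, $N_n^{\mathrm{sym}}/N_n \le 4n\,N_n^{\mathrm{sym}}/\knotdia_n \le \mathrm{poly}(n)\cdot\knotdia_{\lceil n/2\rceil}/\knotdia_n$, which tends to $0$ exponentially fast.

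I expect the quotient step to be the main obstacle. One must check carefully that a diagram automorphism descends to a bona fide half-size decorated map of the same one-component type; control the local structure at the two fixed points (in particular the case in which a fixed point is a crossing, so $p=2$, versus an edge or a face center); and verify that the reconstruction from $(\bar D,g)$ is injective with only polynomially many gluing patterns, so that no overcounting inflates the bound. By contrast, the growth-constant input $\knotdia_{2m}\ge c\,\knotdia_m^2$ and the rooting inequality $\knotdia_n \le 4n\,N_n$ are routine once the connect-sum constructions of Section~\ref{sec:comptangles} are available.
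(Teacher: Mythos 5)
You take a genuinely different route from the paper. The paper deduces Theorem~\ref{thm:asymm} from its pattern-theorem machinery---the connect-sum attachment (Proposition~\ref{thm:edgerep_viable}), smooth growth (Theorem~\ref{thr:smoothgrowth}), and the Richmond--Wormald asymmetry theorem applied to an explicit $2$-tangle with no reflective symmetry that is free in $\KnotShad$ (Figure~\ref{fig:asymm2tangle})---whereas you attempt the classical fundamental-domain argument: bound symmetric diagrams by quotient objects of half size, then beat that count with supermultiplicativity. The parts you call routine are indeed sound: orientation-preserving automorphisms act freely on arcs, so $\knotdia_n \le 4n\,N_n$, and $\knotdia_{\lceil n/2\rceil}\,\knotdia_{\lfloor n/2\rfloor} \le \knotdia_n$ is exactly the injection of Theorem~\ref{thr:smoothgrowth}, giving $\knotdia_{\lceil n/2\rceil}/\knotdia_n \le 2^{-\lfloor n/2\rfloor}$. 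Your quotient analysis is also correct when $\phi$ preserves the orientation of the underlying curve: in that case neither fixed point can lie on the curve (a fixed crossing or an inverted edge forces the strand through it to be reversed), so both cone points sit in faces, the quotient is an honest knot diagram with $n/p$ crossings and two marked faces, and $D$ is recovered uniquely as the connected $p$-fold cyclic branched cover.

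The genuine gap is in the central estimate $N_n^{\mathrm{sym}} \le \mathrm{poly}(n)\,\knotdia_{\lceil n/2\rceil}$ in the curve-reversing case, and it is not a technicality that ``marking two cells'' resolves. When $\phi$ (necessarily of order $2$) reverses the curve and its two fixed points are midpoints of inverted edges---which happens, e.g., for the standard symmetric trefoil diagram---the quotient is a planar map that is $4$-valent except for two $1$-valent cone vertices, and its single strand is an open arc running between them whose two loose ends need \emph{not} lie in a common face. Such an object is not a knot diagram, is not a $2$-tangle diagram in the paper's sense (legs of a tangle must share a face), and cannot be closed up into a knot diagram without destroying planarity (joining ends in different faces costs unboundedly many new crossings), so no injection into $\KnotDia_{\lceil n/2\rceil}$ with polynomial multiplicity is evident. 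The honest fallback is to bound these quotients by all sign-decorated, nearly $4$-valent planar maps on about $n/2$ vertices, roughly ${(24)}^{n/2} \approx 4.9^n$; but then you must prove that $\knotdia_n$ grows strictly faster than ${(\sqrt{24})}^{n}$, i.e.\ that the growth constant of knot shadows exceeds $\sqrt{6}\approx 2.45$. The paper establishes no such lower bound (Theorem~\ref{thr:smoothgrowth} only proves the limit exists), so this needs imported input, e.g.\ supermultiplicativity (which makes the limit a supremum) plus one tabulated value of $\knotshad_{n_0}$. Separately, the theorem the paper proves via Richmond--Wormald covers automorphisms that need not preserve the orientation of the sphere; your argument silently excludes these, and for them the two-fixed-point picture fails entirely---a finite-order orientation-reversing homeomorphism of $S^2$ is a reflection (fixing a circle) or conjugate to the antipodal map (fixing nothing), so the quotient is a disk or a projective plane and needs its own analysis. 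This class-escape problem---quotients of knot diagrams failing to be knot diagrams---is precisely the difficulty the paper's pattern-theorem route is designed to sidestep.
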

These two results answer two experimentally motivated questions posed
in~\cite{Cantarella2015} in the affirmative. Indeed,
Theorem~\ref{thm:asymm} suggests that, for large \(n\), experiments
(\textit{c.f.}\ Section~\ref{sec:randres}) for unrooted knot diagrams
can be run instead on rooted knot diagrams and results will differ
only in that the rooted diagrams \(4n\)-to-1 cover unrooted
diagrams. While sampling rooted knot diagrams uniformly is still
nontrivial, it is reasonably quick to generate rooted knot diagrams of
70 crossings (but it as of yet computationally infeasible to tabulate
even all 12-crossing unrooted diagrams).

\subsection{The Pattern Theorem}
\label{sec:patternthm}

Sumners and Whittington's proof of the FWD
conjecture for self-avoiding lattice polygons makes use of Kesten's
pattern theorem~\cite{Kesten1963,Kesten1964,Madras2013} which states that
patterns---short walk configurations---appear linearly often in long
self-avoiding walks.

\subsubsection{Attachment}
\label{sec:attachments}

We make use of a similar strategy: Theorem 2 of Bender \textit{et
al}.~\cite{Bender1992} provides a pattern theorem for maps, provided
a strategy of attaching a desired pattern. However, care is required
in the case of knot or link \emph{diagrams}, our \emph{decorated} maps
with extra structure. We thus retrace the proof while keeping this in
mind. Say that if \(D\) is a link diagram, then \(e(D)\) is the number
of edges in the diagram \(D\). The proof depends on the existence of
an ``attachment'' scheme:
\begin{definition}
  Let \(Q\) be a tangle diagram, \(\MapClass\) some class of diagrams,
  and \(\mathscr{H}\) a subclass of \(\MapClass\). A \emph{viable
    attachment} of the tangle \(Q\) into \(\mathscr{H}\) is a method
  of taking a diagram \(D \in \mathscr{H}\) and producing new
  diagrams \(D'\) that contain \(Q\) as a subtangle satisfying:
  \begin{enumerate}
  \item For some fixed positive integer \(k\), at least
    \(\lfloor e(D)/k \rfloor\) possible non-conflicting places of
    attachment exist. This means that at least
    \(\lfloor e(D)/k \rfloor\) attachment operations (of \(Q\)) to a
    single diagram can be ``parallelized'' and all performed at once
    to a diagram to produce a new diagram with at least
    \(\lfloor e(D)/k \rfloor\) copies of \(Q\) contained as subtangles.
  \item Only diagrams in \(\MapClass\) are produced (\(\MapClass\) is
    closed under attachments of \(Q\)).
  \item For any diagram produced as such we can identify the copies of
    \(Q\) that have been added and they are all pairwise vertex
    disjoint. Identifying tangles in diagrams is trivial since a
    diagram contains a tangle if there is some dual cycle whose
    interior is the tangle itself. We will consider only \(Q\) which
    are \emph{always} pairwise disjoint, and hence any attachment
    would satisfy the latter half of this condition.
  \item Given the copies of \(Q\) that have been added, the original
    diagram and associated places of attachment are uniquely
    determined. For our attachments, we will provide suitable
    ``inverse'' operations which are themselves parallelizable (since
    instances of \(Q\) are disjoint). This condition then follows.
  \end{enumerate}
\end{definition}

Our tangle compositions yield then the two attachment schemes which
are relevant to the examples that follow.

\begin{definition}
  Consider the 2-tangle of one strand \(Q\) which is two-leg-prime and
  asymmetric with tail arc \(b\). Let \(\MapClass\) be one of:
  \begin{enumerate}
  \item all (rooted) link diagrams,
  \item all (rooted) knot diagrams, or
  \item provided \(Q\) is reduced, all (rooted) reduced knot or link
    diagrams.
  \end{enumerate}
  Then define the attachment \emph{edge replacement} by, given a
  diagram \(D\) and any arc \(a\) in \(D\), defining \(D'\) to be the
  new diagram \(D' = D \#_{(b,a)} Q\).
\end{definition}

\begin{proposition}
  Edge replacement is a viable attachment for choices of \(\MapClass\)
  enumerated above.
  \label{thm:edgerep_viable}
\end{proposition}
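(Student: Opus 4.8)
The plan is to verify, one at a time, the four clauses of the viable-attachment definition, handling the three choices of $\MapClass$ uniformly except where closure forces a split into cases. Throughout I would keep in mind the concrete combinatorial effect of the operation: edge replacement at an arc $a$ splices the one-strand tangle $Q$ into the edge $e(a)$. Writing $e(a)=(a,a'')$, the composition $D\#_{(b,a)}Q$ deletes $e(a)$, joins the tail $b$ of $Q$ to $a$ and the head of $Q$ to $a''$, so that the single strand of $Q$ is inserted in place of the former edge. Since this is a connect summation in the sense of Section~\ref{sec:csdefn}, planarity is preserved and all signs are inherited unchanged from $D$ and $Q$; no new crossing is created at the junction. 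For clause~(1), the natural places of attachment are the edges of $D$, of which there are $e(D)$. Splicing $Q$ into two distinct edges never interferes, since edges meet only at crossings and edge replacement leaves the cyclic data at every crossing of $D$ untouched. Hence the attachments at distinct edges are pairwise non-conflicting and may be carried out simultaneously; excluding the root edge to keep the rooting unambiguous still leaves $e(D)-1$ positions, so taking $k=2$ comfortably supplies at least $\lfloor e(D)/k\rfloor$ places, each contributing one copy of $Q$.

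For clause~(2) the link-diagram case needs nothing beyond the above, as $Q$ carries no closed-loop component and $D'$ is again a $4$-regular planar decorated map. For knot diagrams I would invoke the component count for connect sums recorded at the end of Section~\ref{sec:csdefn}: connect-summing a loop-free tangle into a one-component diagram preserves the number of link components, so $D'$ has exactly one component whenever $D$ does. The reduced case is the delicate one and I defer it below.

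Clauses~(3) and~(4) I would treat together. Because distinct edges yield vertex-disjoint copies of $Q$, the copies added in any simultaneous attachment are automatically pairwise disjoint; each is recognizable as the interior of the length-two dual cycle formed by the two edges carrying its legs, exactly the notion of ``contained tangle'' from Section~\ref{sec:shadowdefs}. Here two-leg-primeness of $Q$ does the work: it guarantees that this $2$-edge-cut genuinely bounds a full copy of $Q$ rather than a proper connect-subsummand, so the copies are correctly and unambiguously identified. For invertibility, the inverse operation excises each identified copy of $Q$ and reglues its two legs back into a single edge; since the copies are disjoint this is parallelizable. The \emph{asymmetry} of $Q$ is what makes the regluing well defined: it forces a unique labeling of the two legs as head and tail, so the reglued edge, the arc $a$ at which attachment occurred, and therefore the original $D$ are recovered uniquely.

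The step I expect to be the main obstacle is closure in the reduced case, i.e.\ showing that splicing a reduced $Q$ into a reduced $D$ creates no nugatory crossing (cut vertex). Crossings of $D$ cannot become nugatory, since edge replacement merely subdivides an edge. A crossing $c$ coming from the inserted $Q$ could be nugatory in $D'$ only if some simple closed curve met $D'$ at $c$ alone, separating off a region $R$. The key dichotomy is where the two legs of $Q$ lie relative to $c$. If both legs lie outside $R$, then $R$ is attached to the rest of $Q$ through $c$ alone, exhibiting $c$ as a cut vertex of $Q$ itself and contradicting the reducedness of $Q$. If the two legs lie on opposite sides of $c$, then because $D$ is $4$-regular and connected it is bridgeless, so $D\setminus e(a)$ stays connected and reconnects the two leg-endpoints through $D$; hence removing $c$ does not disconnect $D'$, and $c$ is not nugatory. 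Making this case analysis fully precise—and confirming that two-leg-primeness is exactly what excludes the degenerate attachment in which the legs would reconnect trivially—is the technical heart of the proposition; once it is in place, clauses~(1)--(4) hold for each listed $\MapClass$ and edge replacement is a viable attachment.
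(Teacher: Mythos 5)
Your overall plan coincides with the paper's proof: both verify the four clauses of the definition, both count non-root edges (with $k=2$) for clause~(1), both use preservation of the link-component count for closure in the knot case, and both use asymmetry of $Q$ to get a well-defined, local inverse for clause~(4). However, there is a genuine gap in your treatment of clause~(3). The disjointness you establish --- that copies attached at \emph{distinct edges} are pairwise vertex-disjoint --- is the trivial half of the condition. What the pattern-theorem machinery actually requires (and what the paper's definition flags by restricting to tangles $Q$ that are \emph{always} pairwise disjoint) is that \emph{any} two occurrences of $Q$ as subtangles of \emph{any} diagram in the class are vertex-disjoint. In the counting step of Theorem~\ref{thr:weakpattern} one removes arbitrary subsets of \emph{all} copies of $Q$ present in a diagram $M$, with no knowledge of which copies were ``added''; if occurrences could overlap, both the binomial bound on the number of preimages and the invertibility of clause~(4) would break. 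Your appeal to two-leg-primeness (``it guarantees that this $2$-edge-cut genuinely bounds a full copy of $Q$ rather than a proper connect-subsummand'') is not the needed statement, and it is not an argument: the $2$-edge-cut formed by an attached copy's legs bounds a full copy of $Q$ by construction, with no hypothesis on $Q$ at all. The paper's actual use of two-leg-primeness is this: if two occurrences $Q_1$, $Q_2$ overlapped, then one of the two edges of the dual $2$-cycle isolating $Q_1$ would lie in the interior of $Q_2$, and deleting that \emph{single} edge would disconnect $Q_2$; but for a one-strand $2$-tangle, two-leg-primeness is precisely the statement that no single interior edge disconnects it. That contradiction is the missing argument, and it is the real content of clause~(3).

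A secondary remark on emphasis: you identify closure of the reduced class as ``the technical heart'' of the proposition, but the paper disposes of it in one sentence (connect summation of reduced pieces introduces no disconnecting vertices), and your case analysis for it, while longer, is essentially correct. The heart of the proposition is the universal disjointness of occurrences described above, which is exactly where your proposal is incomplete.
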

\begin{proof}
  We consider each required property.
  \begin{enumerate}
  \item It has at least \(n-1 \ge \lfloor n/2 \rfloor\) places of
    attachment to a rooted diagram \(D\) with \(n\) edges;
    non-conflicting ways to connect sum \(Q\) into \(D\) are precisely
    the number of non-root edges of \(D\). Given a collection
    \(\{a_i\}_{i=1}^r\) of different arcs, none of whom share an edge,
    in \(D\), edge replacements performed in any order will produce
    the same diagram (since edge replacement does not alter any arcs
    or edges in \(D\) besides the arc chosen and its edge).

  \item Depending on the choice of tangle \(Q\) and class of diagrams
    \(\MapClass\), diagram connect sum can be shown to keep diagrams
    in \(\MapClass\). Namely, provided a tangle \(Q\) of one strand,
    this attachment does not change the number of link components of
    diagrams. Furthermore, If \(Q\) is reduced, then the introduction
    of \(Q\) into a diagram by connect summation introduces no new
    disconnecting vertices.

  \item We can identify the copies of \(Q\) inside the resultant
    diagram by our definition of what it means for a tangle to be
    included in a diagram. Provided \(Q\) is two-leg-prime,
    occurrences of \(Q\) must be pairwise disjoint:

    Suppose to the contrary that \(Q_1\) and \(Q_1\) are instances of
    \(Q\) contained subtangles in a diagram \(D\) which are \emph{not}
    disjoint. The definition of being a subtangle means there exists
    some dual cycle of edges \((e_1, e_2)\) which isolates the tangle
    \(Q_1\) from the rest of \(D\); the cycle is necessarily of length
    2 as \(Q\) is a 2-tangle.

    Now, the condition that \(Q_1\) and \(Q_2\) are not disjoint means
    that, without loss of generality, the edge \(e_2\) appears in
    the interior of \(Q_2\). However, \(Q_2\setminus e_2\) is necessarily
    disconnected, which contradicts that \(Q\) was two-leg prime. So
    instances of \(Q\) must be pairwise disjoint.

  \item Edge replacement is an invertible operation. Given an instance
    of \(Q\) in \(D\) which is identified by the dual edge cycle
    \((e_1, e_2)\), we can recover the diagram \(D'\): Without loss of
    generality, as \(Q\) is asymmetric, we can assume that
    \(e_1 = (ab)\) is the edge which connects the tail arc \(b\) of
    \(Q\) to the arc \(a\) of \(D\). Removing \(e_1\) and \(e_2\) from
    \(D\) yields the pair of 2-tangles \(D^o\) and \(Q\). Denote by
    \(c\) the arc of \(D^o\) which is not \(a\). Then \(D'\) is the
    tangle \(D^o\) together with additional closing edge
    \(e_3 = (ac)\) and we have that \(D = D' \#_{(a,b)} Q\). We made
    no choices in this procedure beyond choice of instance of \(Q\)
    inside \(D\) and each step was entirely local.
  \end{enumerate}
  Hence, edge replacement is a viable attachment.
\end{proof}
If instead we have a \(2k\)-tangle diagram \(Q\) (for \(k>1\)) we can
still define an attachment into diagrams by first joining together
pairs of all but two exterior edges to produce a \(2\)-tangle
\(Q\). This 2-tangle \(Q\) can then be attached by edge replacement.

For classes of prime diagrams which are not closed under connect
summation, we have an alternate attachment;

\begin{definition}
  Consider the nontrivial 4-tangle \(Q\) that is asymmetric and at
  least 4-edge-connected with designated arc \(b\). Let \(\MapClass\)
  be either the class of prime link diagrams or (if \(Q\) consists of
  precisely two strands) the class of prime knot diagrams. Then define
  the \emph{crossing replacement} of a given diagram \(D\) and an arc
  \(a\) in \(D\) to be the diagram \(D' = D\#_{(a,b')} P_\sigma\),
  where \(\sigma\) is the sign of the crossing \(c(a)\) in \(D\) and
  \(P_\sigma\) is defined in figure~\ref{fig:psigmadef}.

  \begin{figure}[hbtp!]
    \centering
        \begingroup
        \input{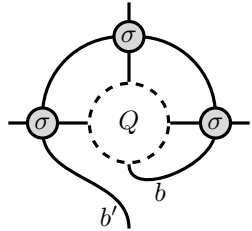}%
        \endgroup
    \caption{Definition of the ``capsid'' tangle \(P_\sigma\).}
    \label{fig:psigmadef}
  \end{figure}
\end{definition}

\begin{proposition}
  Crossing replacement is a viable attachment for choices of
  \(\MapClass\) enumerated above.
  \label{thm:xrep_viable}
\end{proposition}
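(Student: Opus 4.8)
The plan is to verify the four defining properties of a viable attachment one at a time, exactly as in the proof of Proposition~\ref{thm:edgerep_viable}, with the roles previously played by two-leg-primality and the single-strand hypothesis now taken over by the \(4\)-edge-connectivity, asymmetry, and two-strand structure of the capsid \(P_\sigma\). The places of attachment are now the \emph{crossings} of \(D\) rather than its edges: an arc \(a\) selects the crossing \(c(a)\), and two arcs conflict precisely when they lie in a common crossing. Each replacement removes the single vertex \(c(a)\) and glues \(P_\sigma\) onto the four incident half-edges via the cyclic composition \((D\setminus c(a))\#_{(a,b')}P_\sigma\); this is local to that crossing and its four edges, so distinct crossings give non-conflicting, commuting replacements. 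Since a rooted diagram on \(n\) crossings has \(e(D)=2n\) edges and \(n\) crossings, we obtain at least \(n=\lfloor e(D)/2\rfloor\) parallel attachment sites, giving property~(1) with \(k=2\). Planarity is preserved because removing a crossing from a planar \(D\) leaves a planar \(4\)-tangle, and the cyclic composition of two planar \(4\)-tangles is planar.

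For properties~(3) and~(4) I would mirror the edge-replacement argument. A copy of \(Q\) sits inside an inserted capsid, which is cut off from the rest of \(D'\) by a dual cycle of length four (the boundary of a \(4\)-tangle); the copies are therefore identifiable by the definition of containment. To see they are pairwise disjoint, suppose two capsids overlapped, so that a boundary edge of one lay in the interior of the other: this would exhibit a set of fewer than four edges separating part of a capsid, contradicting \(4\)-edge-connectivity. This is the exact analogue of the two-leg-prime argument, with the threshold \(4\) now matching the four boundary edges of a \(4\)-tangle in place of the length-two boundary of a \(2\)-tangle. Invertibility~(4) follows because \(Q\) (hence \(P_\sigma\)) is asymmetric, so each identified capsid can be canonically contracted back to a single crossing in only one way; the sign \(\sigma\) of that crossing is recorded in the choice of \(P_\sigma\) (this is the purpose of keeping the two variants \(P_+\) and \(P_-\)), so the decorated diagram \(D\) and its attachment sites are recovered uniquely, and the contraction is parallelizable since the capsids are disjoint.

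The main obstacle is property~(2), closure of \(\MapClass\) under the attachment, which is the whole reason crossing replacement is introduced: diagram connect sum creates a disconnecting pair of edges and so fails to preserve primality for the prime classes. I would split this into two claims. First, for prime \emph{knot} diagrams one must check that a single link component is preserved; since \(c(a)\) lies on one component, \(D\setminus c(a)\) has its four exterior arcs colored red-red-blue-blue, and the hypothesis that \(Q\) (and hence \(P_\sigma\)) consists of exactly two strands lets us invoke the component count established in Section~\ref{sec:moretangcomp} to conclude \(D'\) is again a knot diagram. Second, and the crux, one must show primality is preserved: any disconnecting pair of edges of \(D'\) would either lie inside the capsid, lie inside \(D\setminus c(a)\), or straddle the four connecting edges; the \(4\)-edge-connectivity of \(P_\sigma\) rules out a cut isolating part of the capsid, and since \(D\) admits no disconnecting pair of edges, no \(2\)-edge-cut of \(D'\) can separate crossings on both sides. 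Checking that the specific capsid of Figure~\ref{fig:psigmadef}, built around a \(4\)-edge-connected asymmetric \(Q\), simultaneously inherits asymmetry, remains at least \(4\)-edge-connected, and carries the correct two-strand coloring is where the genuine work lies, and I expect this case analysis on the capsid to be the hardest part of the argument.
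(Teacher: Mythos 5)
Your proposal is correct and follows essentially the same route as the paper's proof: verify the four properties of a viable attachment in turn, count one non-conflicting attachment site per crossing (so at least \(\lfloor e(D)/2\rfloor\) sites, \textit{i.e.}\ \(k=2\)), use the red-red-blue-blue strand-coloring argument of Section~\ref{sec:moretangcomp} to keep prime knot diagrams closed under the operation, and use the asymmetry of \(P_\sigma\) together with pairwise disjointness to invert the attachment uniquely and locally. The only divergences are minor: for disjointness the paper rules out dual subpaths of length \(1\), \(2\), and \(3\) through an overlapping capsid (using nontriviality of \(Q\) and the parity of the dual quadrangulation) where you appeal directly to \(4\)-edge-connectivity, and conversely you elaborate the primality-closure step (property~(2)) that the paper disposes of in a single sentence.
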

\begin{proof}
  We consider each required property.
  \begin{enumerate}
  \item For the class of all prime link diagrams, any choice of arc in
    a diagram provides a crossing attachment of \(Q\) which is a link
    diagram. For the class of all prime knot diagrams on the other
    hand, each crossing has at least one choice of arc for which the
    insertion of \(P_\sigma\) produces a knot diagram. So for any
    diagram and for each crossing inside the diagram, there exists at
    least one viable location for crossing replacement, and provided
    crossing replacements happen at different crossings, they are
    independent. So there exists at least
    \(n/2 \ge \lfloor n/2 \rfloor \) non-conflicting places of
    attachment of \(Q\) into a diagram with \(n\) edges.

  \item Provided appropriate \(Q\), the attachment is closed in
    \(\MapClass\) as we only choose appropriate arcs.

  \item We can identify the copies of \(P_\sigma\) inside the resultant
    diagram by our definition of what it means for a tangle to be
    included in a diagram. Suppose that \(P_1\) and \(P_2\) are
    instances of \(P_\sigma\) which are not disjoint in \(D\)
    (\(\sigma\) does not matter and can be different for each
    \(P_i\)).

    That \(P_1\) and \(P_2\) are not disjoint means that the boundary
    cycle \((e_1, e_2, e_3, e_4)\) of \(P_1\) must in some part enter
    the interior of the instance \(P_2\). Notice that as \(Q\) is
    nontrivial, there is no dual path of length 1 through \(P_2\);
    this implies that the subpath which is inside \(P_2\) cannot be
    length 1. Conversely, it cannot be length 3 (switch the roles of
    \(P_1\) and \(P_2\)). So the subpath would have to be of length 2;
    as the dual map is a quadrangulation, a dual path of length two
    would have to be trivial (which cannot happen by structure of
    \(P_i\)) to begin at one face and end at the face
    opposite. However, by structure of \(P_i\) and that \(Q\) is
    nontrivial this is impossible.

  \item Crossing replacement is an invertible operation. Given an
    instance of \(P_\sigma\) in \(D\) which is identified by the dual
    edge cycle \((e_1, e_2, e_3, e_4)\), we can recover the diagram
    \(D'\): Without loss of generality, as \(P_\sigma\) is asymmetric,
    we can assume that \(e_1 = (ab')\) is the edge which connects the
    tail arc \(b'\) of \(P_\sigma\) to the arc \(a\) of
    \(D\). Removing \(e_1, e_2, e_3\), and \(e_4\) from \(D\) yields
    the pair of 4-tangles \(D^o\) and \(P_\sigma\). Denote by
    \(x,y,z\) the arcs counterclockwise around \(D^o\) after
    \(a\). Then \(D'\) is the tangle \(D^o\) together with the four
    arcs brought together into a crossing \(x\) with sign \(\sigma\)
    identified from \(P_\sigma\). We have that
    \(D = D' \#_{(a,b')} P_\sigma\). We made no choices in this
    procedure beyond choice of instance of \(P_\sigma\) inside \(D\)
    and each step was entirely local.
  \end{enumerate}
  Hence, crossing replacement is a viable attachment.
\end{proof}

\subsubsection{Pattern theorems for diagrams}

Given a generating function \(F\), let \(r(F)\) be its radius of
convergence. Being able to attach desired subtangles freely
yields;
\begin{theorem}Let \(\MapClass\) be some class of link diagrams on any
  surface and let \(Q\) be a tangle diagram that can be contained in
  diagrams in \(\MapClass\). Let \(M(x)\) be the generating function
  by number of edges for \(\MapClass\). Let \(H(x)\) be the generating
  function by number of edges for those link diagrams \(M\) in
  \(\MapClass\) that contain less than \(ce(M)\) pairwise disjoint
  copies of \(Q\); call this class \(\mathscr{H}\). Suppose there is a
  viable attachment for \(Q\) into \(\mathscr{H}\).  If \(0 < c < 1\)
  is sufficiently small, then \(r(M) < r(H)\). The diagrams may be
  rooted or not.
  \label{thr:weakpattern}
\end{theorem}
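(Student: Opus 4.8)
The plan is to follow the Kesten--Madras--Slade strategy for pattern theorems, transported to generating functions in the spirit of Bender et al. Write $\beta = 1/r(M)$ for the exponential growth rate of $\MapClass$ counted by edges (Cauchy--Hadamard), and let $q$ denote the fixed number of edges that a single application of the attachment adds to a diagram. Since $r(H) = 1/\limsup_n H_n^{1/n}$, the conclusion $r(M) < r(H)$ is equivalent to bounding the growth rate $h := \limsup_n H_n^{1/n}$ of $\mathscr{H}$ strictly below $\beta$, and that is what I would establish.

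The engine is the viable attachment, used as a counting near-injection. Given $D \in \mathscr{H}$ with $n$ edges, property~(1) furnishes at least $\lfloor n/k\rfloor$ pairwise non-conflicting attachment sites, and since the attachment is parallelizable, for any choice of $m$ of these sites I may insert a copy of $Q$ at each simultaneously, producing $D' \in \MapClass$ (property~(2)) with $N = n + mq$ edges. This gives a map from $\mathscr{H}_n$ paired with the $\binom{\lfloor n/k\rfloor}{m}$ choices of $m$ sites into $\MapClass_N$. Properties~(3)--(4) say that from $D'$ together with a designation of which copies of $Q$ are the added ones we recover $(D,\text{sites})$. Because $D \in \mathscr{H}$ carries fewer than $cn$ copies of $Q$ to start, the image $D'$ has fewer than $cn+m$ disjoint copies in all, so at most $\binom{cn+m}{m}$ designations are possible; hence the map is at most $\binom{cn+m}{m}$-to-one. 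This yields the fundamental inequality
\[
  M_{\,n+mq}\,\binom{cn+m}{m} \;\ge\; H_n\,\binom{\lfloor n/k\rfloor}{m}.
\]

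From here the argument is an entropy comparison. I set $m = \lfloor \alpha n\rfloor$ for a small fixed $\alpha \in (0,1/k)$, apply Stirling to both binomial coefficients, take $n$th roots along a subsequence realizing $h$, and pass to the limit; using $\limsup_N M_N^{1/N} = \beta$ this gives
\[
  \beta^{\,1+\alpha q} \;\ge\; h\cdot e^{\,\eta_1(\alpha)-\eta_2(\alpha)},
\]
where $\eta_1,\eta_2$ are the per-edge entropies of $\binom{n/k}{\alpha n}$ and $\binom{(c+\alpha)n}{\alpha n}$. A short computation shows $\eta_1(\alpha)-\eta_2(\alpha) = \alpha\log\tfrac{1}{kc} + o(\alpha)$ as $\alpha \to 0$, so
\[
  \log h \;\le\; \log\beta + \alpha\Big(q\log\beta - \log\tfrac{1}{kc}\Big) + o(\alpha).
\]
Thus, provided $c$ is small enough that $kc\,\beta^{q} < 1$, the bracketed coefficient is negative and a small positive $\alpha$ forces $h < \beta$, i.e. $r(M) < r(H)$. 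This also pins down the meaning of ``$c$ sufficiently small'': the entropy $\eta_1$ gained by freely inserting copies must beat both the dilution $\beta^{\alpha q}$ from the extra edges and the overcounting $\eta_2$ coming from copies already present in $\mathscr{H}$.

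The step I expect to be the main obstacle is the multiplicity bookkeeping, namely justifying the $\binom{cn+m}{m}$-to-one claim rigorously. It requires that attaching copies of $Q$ neither destroys a pre-existing copy nor manufactures spurious copies beyond the $m$ inserted, so that the total number of disjoint copies in $D'$ is genuinely at most $cn+m$ and every preimage is realized by detaching some $m$ of them. These facts should follow from the disjointness and locality built into properties~(3)--(4) (as verified for edge- and crossing-replacement in Propositions~\ref{thm:edgerep_viable} and~\ref{thm:xrep_viable}), but making the count uniform in $n$---and checking that nothing breaks on an arbitrary surface---is the delicate part. The remaining ingredients, the existence of $\beta$ and the Stirling asymptotics, are routine.
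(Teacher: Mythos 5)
Your proposal is correct, but it takes a genuinely different technical route from the paper's. The paper never fixes the number of attached copies: it forms the generating function \(G(z)=\sum_{Y\in\mathscr H}z^{e(Y)}\left(1+z^{q-1}\right)^{\lfloor e(Y)/k\rfloor}\) counting attachments over \emph{arbitrary} subsets of the sites, invokes Lemma~3 of Bender et al.\ to convert this into the exact radius relation \(r(H)^k=r(G)^{k-1}F(r(G))\) with \(F(z)=z+z^q\), and bounds the overcounting by \(t_n=n(e/c)^{cn}\) --- the counterpart of your \(\binom{cn+m}{m}\), obtained by noting that a diagram with \(n\) edges carries at most \(n\) pairwise disjoint copies of \(Q\) and that any preimage in \(\mathscr H\) must be recovered by deleting all but fewer than \(cn\) of them. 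This yields \(r(H)/r(M)\ge\left(1+r(G)^{q-1}\right)^{1/k}(c/e)^c\), and the final step needs a lower bound on \(r(G)\) that is \emph{uniform in} \(c\), which the paper extracts from the universal \(12^n\) bound on planar maps (giving \(r(G)\ge 1/(24\sqrt\kappa)\)). Your version instead fixes the attachment density \(m=\lfloor\alpha n\rfloor\) and runs the Kesten/Madras--Slade entropy comparison directly. Yours is more self-contained (no external generating-function lemma, no appeal to map enumeration bounds beyond finiteness of \(\beta\)) and it produces an explicit admissibility threshold \(kc\beta^q<1\) for \(c\); the paper's route buys constants that do not require knowing \(\beta\), and it sits inside the Bender et al.\ framework that is reused verbatim for the smooth-growth upgrade to Theorem~\ref{thr:strongpattern}.

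On the step you flag as the main obstacle: your worry is legitimate, but it is exactly the bookkeeping the paper's own proof relies on, and it is discharged the same way. The paper restricts to patterns \(Q\) whose copies are \emph{always} pairwise disjoint (property (3), verified for the tangles of Propositions~\ref{thm:edgerep_viable} and~\ref{thm:xrep_viable}) and whose removal is a local inverse (property (4)). Granting that, you do not even need the stronger claim that attachment never manufactures spurious copies: a designation of \(m\) added copies in \(D'\) can yield a preimage in \(\mathscr H_n\) only if the copies disjoint from the designated ones survive removal, so either \(D'\) carries fewer than \(cn+m\) disjoint copies (and \(\binom{cn+m}{m}\) bounds the designations) or no designation is valid at all; this one-sided locality statement is the same one the paper uses to assert that a preimage must delete at least ``(number of copies) \(-\,cn\)'' of them. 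The one caveat worth recording for both arguments is that they presuppose \(\beta=1/r(M)<\infty\), i.e.\ at most exponential growth of \(\MapClass\); the paper secures this with the planar \(12^n\) bound, so ``any surface'' should be read as ``any fixed surface.''
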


The proof of the theorem makes use of a lemma of Bender \textit{et al.}:

\begin{lemma*}[\cite{Bender1992}, Lemma 3]
  If
  \begin{enumerate}
  \item \(F(z) \ne 0\) is a polynomial with non-negative coefficients
    and \(F(0) = 0\),
  \item \(H(w)\) has a power series expansion with non-negative
    coefficients and \(0 < r(H) < \infty\),
  \item for some positive integer \(k\) the linear operator
    \(\mathscr{L}\) is given by \(\mathscr{L}(w^n) =
    z^n{(F(z)/z)}^{\lfloor n/k \rfloor}\), and
  \item \(G(z) = \mathscr{L}(H(w))\),
  \end{enumerate}
  then \({r(H)}^k = {r(G)}^{k-1}F(r(G))\).
\end{lemma*}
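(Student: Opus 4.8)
The plan is to make the action of $\mathscr{L}$ completely explicit and then recognize $G$ as a power series in an auxiliary substitution variable. Writing $n = qk + s$ with $0 \le s \le k-1$, and using $F(0)=0$ (so that $F(z)/z$ is genuinely a polynomial with non-negative coefficients), I would first record the identity $z^{n}(F(z)/z)^{\lfloor n/k\rfloor} = z^{s}\,(z^{k-1}F(z))^{q}$. Setting $u(z) := z^{k-1}F(z)$ and grouping the terms of $H(w)=\sum_n h_n w^n$ by residue class modulo $k$, this rewrites $G$ as $G(z) = \sum_{s=0}^{k-1} z^{s}\,H_s(u(z))$, where $H_s(t) := \sum_{q\ge 0} h_{qk+s}\,t^{q}$ is the $s$-th section of $H$, i.e.\ $H(w)=\sum_{s=0}^{k-1} w^{s}H_s(w^{k})$. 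Thus $G$ is obtained from $H$ by substituting $u(z)$ in place of $w^{k}$, and the whole lemma becomes a statement about how the radius of convergence transforms under this substitution.

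Next I would record the elementary behaviour of $u$ on the non-negative real axis: since $F$ is a nonzero polynomial with non-negative coefficients and $F(0)=0$, the function $u(z)=z^{k-1}F(z)$ is continuous, strictly increasing, and a bijection from $[0,\infty)$ onto $[0,\infty)$, with $u(0)=0$ and $u(z)\to\infty$. I would then pin down the radius of each section: because all coefficients are non-negative, for real $w\ge 0$ the inequality $w^{s}H_s(w^{k})\le H(w)$ shows $H_s(t)$ converges for $t<r(H)^{k}$, so each section has radius $R_s\ge r(H)^{k}$; and since $H(w)<\infty$ on the positive reals exactly when every $H_s(w^{k})<\infty$, one obtains $r(H)=(\min_s R_s)^{1/k}$, hence $\min_s R_s = r(H)^{k}$.

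Finally I would invoke the non-negativity of the coefficients of $G$ (immediate, as $h_n\ge 0$ and $F(z)/z$ has non-negative coefficients) together with Pringsheim's theorem, so that $r(G)$ is governed entirely by the positive real axis. For real $z\ge 0$ the series $G(z)=\sum_s z^{s}H_s(u(z))$ is finite precisely when $u(z)<\min_s R_s = r(H)^{k}$, and infinite once $u(z)>r(H)^{k}$; combined with the monotone-bijection property of $u$ this forces $r(G)=u^{-1}\!\big(r(H)^{k}\big)$, i.e.\ $u(r(G)) = r(G)^{k-1}F(r(G)) = r(H)^{k}$, which is the claimed identity. (As a sanity check, when $k=1$ this degenerates to $G=H\circ F$ and the identity reads $F(r(G))=r(H)$.) The step I expect to be the main obstacle is making the radius-of-substitution argument airtight: establishing $\min_s R_s=r(H)^{k}$ and converting the statement ``$G$ is finite iff $u(z)<r(H)^{k}$ on the positive axis'' into the equality $r(G)=u^{-1}(r(H)^{k})$ requires careful use of Pringsheim and monotonicity rather than a naive root-test computation, since the $\limsup$ defining $r(G)$ mixes the distinct residue-class subsequences and must be shown to be controlled by the single dominant section.
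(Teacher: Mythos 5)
Your proof is correct, but note that there is nothing in the paper to compare it against: the paper does not prove this lemma at all, it imports it verbatim as Lemma~3 of Bender \emph{et al.}~\cite{Bender1992} and only uses its conclusion inside the proof of Theorem~\ref{thr:weakpattern}. Judged on its own, your argument is sound and essentially self-contained. The identity \(z^{n}(F(z)/z)^{\lfloor n/k\rfloor}=z^{s}\bigl(z^{k-1}F(z)\bigr)^{q}\) for \(n=qk+s\) is right, and it correctly reduces the lemma to a substitution statement \(G(z)=\sum_{s=0}^{k-1}z^{s}H_{s}(u(z))\) with \(u(z)=z^{k-1}F(z)\). The two points that could have been gaps are handled properly: (i) the identification \(\min_{s}R_{s}=r(H)^{k}\) follows exactly as you say, since for non-negative series convergence of \(H(w)\) at a positive real point is equivalent to convergence of every section there, and the radius of a non-negative series equals the supremum of its positive real points of convergence; (ii) the possible ambiguity of convergence exactly at the boundary \(u(z)=r(H)^{k}\) is immaterial, because you only squeeze the finiteness set of \(G\) between \(\bigl[0,u^{-1}(r(H)^{k})\bigr)\) and \(\bigl[0,u^{-1}(r(H)^{k})\bigr]\), and the supremum --- which is what determines \(r(G)\) --- is the same in either case. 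One small quibble: what you need on the positive axis is not Pringsheim's theorem proper (that \(z=r\) is a singularity) but the more elementary comparison fact that \(r(G)=\sup\{z\ge 0: G(z)<\infty\}\); you in effect use the latter, so nothing breaks. Your \(k=1\) sanity check \(G=H\circ F\), \(F(r(G))=r(H)\) is also consistent with the general identity \(r(H)^{k}=r(G)^{k-1}F(r(G))\).
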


The proof of the theorem then remains almost unchanged from the
original theorem. Depending on the class \(\MapClass\) of diagrams and
tangles \(P\) considered, the construction of the attachment will
change.

\begin{proof}[Proof of Theorem~\ref{thr:weakpattern}.]
  Let \(G(z)\) be the generating function which counts the ways by edges of
  attaching some number between \(0\) and \(\lfloor n/k \rfloor\)
  copies of \(Q\) to diagrams in \(\MapClass\) counted by
  \(H(x)\). The method of attachment leads to the relation
  \(G(z) = \mathscr{L}(H(w))\), with \(\mathscr{L}\) as defined in the
  lemma, where \(F(z) = z + z^q\) and \(q\), is the number of edges
  added when a copy of \(Q\) is attached, as
  \[
    G(z) = \sum_{Y \in \mathscr{H}}{z^{e(Y)}{ \left( 1 + z^{q-1} \right)}^{\lfloor e(Y)/k \rfloor}} = \mathscr{L}(H(w)).
  \]
  Let \(g_n\) be the coefficients of \(G(z)\).

  Suppose \(M \in \MapClass\) contains \(m\) copies of \(Q\). By
  property (3) of our attachment, \(m \le n\). If \(M\) had been
  produced from some diagram \(K\) in \(\mathscr{H}\) by our
  attachment process, we can find all possible \(K\) by removing at
  least \(m - cn\) copies of \(Q\) from \(M\). It is possible to bound
  from above the number of ways to do this by

  \begin{align*}
    \sum_{j \ge m-cn}{\binom m j} &= \sum_{k < cn}{\binom m k} < \sum_{k < cn}{\binom n k} \le {n\binom n{cn}} \\
    &\le \frac{n{(ne)}^{cn}}{{cn}^{cn}} = n{\left(\frac ec\right)}^{cn} =: t_n.
  \end{align*}

  If \(M(x) = \sum{m_n{x^n}}\), then \(m_n \ge g_n\) and \(t_n > 1\)
  for sufficiently large \(n\), so \(m_n \ge g_n/t_n\). Hence,
  \[
    1/r(M) \ge \limsup_{n \to \infty}{{(g_n/t_n)}^{1/n}} = \lim_{n \to
      \infty}{{(t_n)}^{-1/n}}\limsup_{n \to \infty}{{(g_n)}^{1/n}} \ge
    {(c/e)}^c/r(G).\footnote{The power of \(1/n\) on \(\limsup_{n \to
        \infty}{{(g_n)}}\) is missing in the original proof of Bender
          \textit{et al.}}
  \]
  By the prior lemma, \({r(H)}^k = {r(G)}^k(1 + {r(G)}^{q-1})\) so
  that
  \[ r(H)/r(M)\ \ge {(1 + {r(G)}^{q-1})}^{1/k} {(c/e)}^c. \]
  As there are fewer than \(12^n\) planar maps with \(n\)
  edges~\cite{Bender1986}, a trivial bound on the coefficients
  \(h_n\) of \(H(z)\) is \(h_n \le (12^n)(\kappa^{n/2})\) where
  \(\kappa \ge 1\) is the size of the set of decorations on the
  vertices of the diagram (in the scope of this paper,
  \(\kappa = 2\) or 1).  This provides the bound
  \({r(H)}^k \ge 1/{(12\sqrt \kappa)}^k\).  Then, as
  \(\lim_{c \to 0^+}{{(c/e)}^c} = 1\) and
  \({r(G)}^k(1 + {r(G)}^{q-1}) = {r(H)}^k \ge
  1/{(12\sqrt{\kappa})}^k\) so that
  \[r(G) \ge 1/(12\sqrt{\kappa}{(1+{r(G)}^{q-1})}^{1/k}) \ge
    1/(24\sqrt{\kappa}),\]
it follows that \(r(H)/r(M) > 1\) for
  sufficiently small \(c\), completing the proof of the theorem.
\end{proof}

The conclusion is about radii of convergence of two power series;
application of the Cauchy-Hadamard theorem, together with one
additional hypothesis, gives the stronger result:

\begin{theorem}[Pattern Theorem;~\cite{Bender1992}]
  Suppose all of the hypotheses of Theorem~\ref{thr:weakpattern} and
  additionally that \(\MapClass\) grows smoothly, \textit{i.e.}\ that
  \(\lim_{n\to\infty}{m_n^{1/n}}\) exists. Then there exists constants
  \(c > 0\) and \(d < 1\) and \(N > 0\) so that for all \(n \ge N\),
  \[
    \frac{h_n}{m_n} < d^n.
  \]
  \textit{I.e.}, the pattern \(P\) is \emph{ubiquitous}.
  \label{thr:strongpattern}
\end{theorem}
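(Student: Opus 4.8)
The plan is to bootstrap the strict comparison of radii of convergence furnished by Theorem~\ref{thr:weakpattern} into an exponential bound on the coefficient ratio, using the Cauchy--Hadamard theorem to pass between radii of convergence and the exponential growth rates of coefficients. Write \(\mu = 1/r(M)\) and \(\eta = 1/r(H)\). Since both series have nonnegative coefficients, Cauchy--Hadamard gives \(\mu = \limsup_{n\to\infty} m_n^{1/n}\) and \(\eta = \limsup_{n\to\infty} h_n^{1/n}\). The conclusion \(r(M) < r(H)\) of Theorem~\ref{thr:weakpattern} is precisely the strict inequality \(\eta < \mu\), and this gap is the engine of the whole argument.

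The smooth-growth hypothesis is exactly what lets me upgrade the \(\limsup\) for \(\MapClass\) to a genuine limit, and this is the sole place it enters. First I would fix any \(\epsilon\) with \(0 < \epsilon < (\mu - \eta)/2\), so that \(\eta + \epsilon < \mu - \epsilon\). From \(\limsup_{n} h_n^{1/n} = \eta\) I obtain an upper bound \(h_n < (\eta + \epsilon)^n\) for all \(n\) beyond some threshold \(N_1\); this is the defining property of \(\limsup\), namely that eventually every term falls below \(\eta + \epsilon\). From the \emph{limit} \(\lim_{n} m_n^{1/n} = \mu\) I obtain a matching lower bound \(m_n > (\mu - \epsilon)^n\) for all \(n\) beyond some threshold \(N_2\). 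Here it is essential that the limit exists: a mere \(\limsup\) would guarantee this lower bound only along a subsequence, not for all large \(n\).

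Combining the two bounds for \(n \ge N := \max(N_1, N_2)\) yields
\[
  \frac{h_n}{m_n} < \left(\frac{\eta + \epsilon}{\mu - \epsilon}\right)^{n},
\]
so setting \(d = (\eta + \epsilon)/(\mu - \epsilon) < 1\) completes the proof; the constant \(c\) is simply any value small enough that Theorem~\ref{thr:weakpattern} applies, inherited unchanged from that statement.

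I expect no serious obstacle in this step: the genuinely hard work---exhibiting a viable attachment and deducing the strict separation \(r(M) < r(H)\)---has already been carried out in Theorem~\ref{thr:weakpattern} together with Propositions~\ref{thm:edgerep_viable} and~\ref{thm:xrep_viable}. The only point requiring care is the asymmetry between the two coefficient sequences: \(\MapClass\) is assumed to grow smoothly whereas the subclass \(\mathscr{H}\) is not, so the lower bound on \(m_n\) must come from a limit while the upper bound on \(h_n\) can come only from a \(\limsup\). Tracking which estimate holds for \emph{all} large \(n\) versus merely \emph{infinitely many} is the whole subtlety, and it is resolved exactly by the smooth-growth assumption.
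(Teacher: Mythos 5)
Your proof is correct and follows exactly the route the paper intends: it invokes Cauchy--Hadamard to convert the strict gap \(r(M) < r(H)\) from Theorem~\ref{thr:weakpattern} into an eventual exponential separation of coefficients, with the smooth-growth hypothesis used precisely to upgrade \(\limsup m_n^{1/n}\) to a limit so the lower bound on \(m_n\) holds for all large \(n\). The paper gives only this one-line sketch (deferring to Bender \emph{et al.}), and your write-up is a faithful and complete expansion of it.
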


Because of Euler's formula, the number of vertices, edges, or faces in
a link shadow or planar quadrangulation is entirely determined by
choosing any one cardinality. Hence, we can size shadows and diagrams
by the number of vertices and still keep the above results.

\subsection{Smooth growth}
\label{sec:smooth-growth}

In order for the (stronger) corollary to hold, we require that the
class of diagrams \emph{grow smoothly}, \textit{i.e.}\ that (for \(m_n =
|\MapClass_n|\)) the limit
\[
  \lim_{n\to\infty}{m_n^{1/n}}
\]
exists.

The key in proving this behavior is the strengthening of Fekete's
lemma~\cite{Rensburg2000,Wilker1979} reproduced below:

\begin{untheorem}[Wilker-Whittington 1979~\cite{Wilker1979}]
  Let \(f(m)\) be a function with
  \(\lim_{m\to\infty}{m^{-1}f(m)} = 1\) and \(\{a_n\}\) be a series.
  Suppose that \(a_na_m \le a_{n+f(m)}\) for all \(n,m\) sufficiently
  large and that \(a_n^{1/n}\) is bounded above (for all of our cases
  we have the trivial bound for all planar maps of \(144\)).
  Then
  \[ \lim_{n\to\infty}{{a_n}^{1/n}} = \limsup_{n\to\infty}{{a_n}^{1/n}} = 1/r < \infty \]
  where \(r\) is the radius of convergence of the series
  \(\sum_{n=1}^{\infty}{a_nz^n}\) and furthermore \(a_n \le (1/r)^{f(n)}\).
\end{untheorem}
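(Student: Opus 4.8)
The plan is to read this as a Fekete-type lemma for a sequence that is \emph{supermultiplicative up to the shift} $f$. Since in every application the counts satisfy $a_n \ge 1$, I may assume $a_n > 0$ for all large $n$ and pass to logarithms, writing $b_n = \log a_n$; the hypothesis $a_n a_m \le a_{n+f(m)}$ then reads $b_n + b_m \le b_{n+f(m)}$ for all large $n,m$. Set $\mu = \limsup_{n\to\infty} b_n/n$, which is finite because $a_n^{1/n}$ is bounded above (by $144$ in our setting). The Cauchy--Hadamard theorem already gives $1/r = \limsup_n a_n^{1/n} = e^{\mu}$, which is finite and positive, so the actual content of the theorem is (i) that $\liminf_n b_n/n$ also equals $\mu$, making the limit exist, and (ii) the explicit bound $b_n \le \mu\, f(n)$.

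The engine for both parts is a single iterate of the hypothesis: keeping $m$ fixed and applying $b_N + b_m \le b_{N+f(m)}$ successively to $N = n,\, n+f(m),\, n+2f(m),\dots$ (all large once $n$ is), induction on $k$ yields $b_{n+kf(m)} \ge b_n + k\,b_m$ for every $k \ge 0$. Note also that $f(m)/m \to 1$ forces $f(m) \to \infty$, so $f(m) \ge 1$ eventually and these progressions are nondegenerate.

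For the existence of the limit I would fix $\epsilon > 0$ and choose a large $m$ with $b_m/f(m) \ge \mu - \epsilon$; such $m$ exist because $b_m/f(m) - b_m/m = (b_m/m)(m/f(m)-1) \to 0$ (the factor $b_m/m$ being bounded), so $\limsup_m b_m/f(m) = \mu$. With this $m$ fixed, dividing the iterate by $n+kf(m)$ and letting $k \to \infty$ shows that along each progression $\{n+kf(m)\}_k$ one has $\liminf_k b_{n+kf(m)}/(n+kf(m)) \ge b_m/f(m) \ge \mu - \epsilon$. Letting the base point $n$ run over a block of $f(m)$ consecutive large integers covers every sufficiently large index (one residue class mod $f(m)$ per base point), so these finitely many progressions exhaust all large $N$ and hence $\liminf_N b_N/N \ge \mu - \epsilon$. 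As $\epsilon \to 0$ this gives $\liminf = \limsup = \mu$, i.e.\ $\lim_n a_n^{1/n} = e^{\mu} = 1/r$.

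For the bound I would specialize the same iterate to $m = n$, giving $b_{n+kf(n)} \ge (k+1)b_n$. Dividing by $n+kf(n)$ and letting $k \to \infty$, the right-hand side tends to $b_n/f(n)$, while the left-hand side runs through a subsequence of $\{b_N/N\}$ and so has $\limsup$ at most $\mu$; therefore $b_n/f(n) \le \mu$, that is $a_n \le (e^{\mu})^{f(n)} = (1/r)^{f(n)}$ for all large $n$. The main obstacle---and the reason this is not literally Fekete's lemma---is precisely the shift $f(m)\neq m$: the iterate advances the index only by multiples of $f(m)$, so one never lands on every integer, and the classical equality $\lim = \sup$ degrades to $\lim = \limsup$. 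The care required is just (a) checking that the finitely many progressions modulo $f(m)$ genuinely cover all large indices and (b) the harmless $\limsup$/$\liminf$ bookkeeping above.
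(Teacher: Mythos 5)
Your proof is correct, but note that there is nothing internal to compare it against: the paper does not prove this statement. It is quoted as an external ``black box'' from Wilker--Whittington (and van Rensburg's book) and then applied with \(f(m)=m\) and \(f(m)=m+2\) to get smooth growth of \(\KnotShad\) and \(\PrimeKnotShad\). Your argument---pass to \(b_n=\log a_n\), iterate the inequality along arithmetic progressions of step \(f(m)\), choose \(m\) with \(b_m/f(m)\ge\mu-\epsilon\) (legitimate because \(m/f(m)\to 1\) and \(b_m/m\) is bounded), cover all large indices by the \(f(m)\) residue classes, and specialize \(m=n\) for the bound \(a_n\le(1/r)^{f(n)}\)---is the standard generalized-Fekete argument and is sound; it is presumably close in spirit to the original source's proof. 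Two caveats are worth recording. First, your reduction to \(a_n\ge 1\) is used twice: to take logarithms at all, and (more subtly) to bound \(b_m/m\) from \emph{below} in the step \((b_m/m)(m/f(m)-1)\to 0\); this is harmless here since every sequence the paper feeds into the theorem is a counting sequence, but it is an extra hypothesis relative to the bare statement (alternatively, one can avoid it by noting that \(\limsup x_m y_m = \limsup x_m\) whenever \(y_m\to 1\), \(y_m>0\), and the limsup is finite). Second, because the supermultiplicative inequality is only assumed for \(n,m\) sufficiently large, your argument yields \(a_n\le(1/r)^{f(n)}\) only for sufficiently large \(n\); that is the correct reading of the theorem's final clause and is all the paper's applications need.
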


For \(f(m) = m\) we recover the usual result. Observe as well that
functions \(f(m) = m + k\) satisfy the hypothesis for fixed \(k\).

\section{Applications to classes of rooted diagrams}
\label{sec:appltodiagrams}

\subsection{The Pattern Theorem for rooted link diagrams}
\label{sec:patthmlinks}

As noted, the classes \(\LinkDia\) of rooted link shadows and
\(\PrimeLinkDia\) of rooted prime link shadows have been counted
exactly; they both hence grow \emph{smoothly}. Pattern theorems for
these two classes hence follow given satisfactory tangles \(P\) and
attachment operations.

\begin{figure}[hbtp]
  \centering
  \begin{subfigure}[hbtp]{0.3\linewidth}
    \centering
        \begingroup
        \input{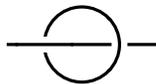}%
        \endgroup

    \caption{A tangle \(\Phi\) which can be attached into diagrams
      in \(\LinkDia\) by connect sum.}
    \label{fig:addcomptangle}
  \end{subfigure}\hfil
  \begin{subfigure}[hbtp]{0.3\linewidth}
    \centering
        \begingroup
        \input{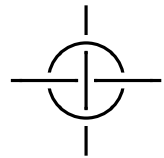}%
        \endgroup

    \caption{A tangle \(X\) which can be attached into diagrams in
      \(\PrimeLinkDia\) by crossing replacement.}
    \label{fig:addprimelinktangle}
  \end{subfigure}
  \caption{Examples of tangles for attachment to link diagrams.}
\end{figure}

\subsubsection{Rooted link diagrams} For the class of all rooted link
shadows and \emph{any} two-leg-prime \(2\)-tangle \(P\), the connect
summation with \(P\) is a viable attachment, \textit{c.f.}\
Proposition~\ref{thm:edgerep_viable}. Hence we conclude that
asymptotically almost surely any two-leg-prime tangle \(P\) is
contained linearly often in an arbitrary link diagram \(D\).

From this we see that:
\begin{proposition}
  Almost every rooted link diagram has more than one
  component. Additionally, almost every rooted link diagram is neither
  the unknot nor a split link of unknots. In other words there exists
  \(d < 1\) and \(N > 0\) so that for \(n \ge N\),
  \[
    \Prb(\text{a rooted link diagram \(L\) with \(n\) crossings has
      one component}) < d^n,
  \]
  \[
    \Prb(\text{a rooted link diagram \(L\) with \(n\) crossings
      represents the unknot}) < d^n,
  \]
  and
  \[
    \Prb\left(
      \begin{array}{c}
        \text{a rooted link diagram \(L\) with \(n\)
          crossings} \\
        \text{represents a split link of
          unknots}
      \end{array}
\right) < d^n.
  \]
\end{proposition}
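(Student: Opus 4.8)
The plan is to derive all three bounds from the strong Pattern Theorem (Theorem~\ref{thr:strongpattern}) applied to the class $\MapClass = \LinkDia$ of rooted link diagrams. Two of its hypotheses are already in hand: $\LinkDia$ grows smoothly, since $\linkdia_n$ is known exactly and $\linkdia_n^{1/n}\to 24$; and, as noted immediately above, connect summation of any two-leg-prime, asymmetric $2$-tangle $P$ is a viable attachment into $\LinkDia$ (Proposition~\ref{thm:edgerep_viable}). Consequently, for any such $P$ there are constants $c>0$, $d<1$, $N$ so that, writing $\mathscr H$ for the diagrams containing fewer than $c\,e(M)$ pairwise disjoint copies of $P$, the fraction of $n$-crossing link diagrams lying in $\mathscr H$ is at most $d^n$ for $n \ge N$. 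The strategy for each of the three events is then the same: choose $P$ so that the presence of even a \emph{single} copy forces the relevant ``bad'' property to fail. The bad diagrams then contain zero copies of $P$, so they lie in $\mathscr H$, and the desired exponential bound is immediate. Taking the worst of the three pairs $(d,N)$ yields uniform constants.

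For the first two events I would take $P = \Phi$ to be the two-leg-prime, asymmetric $2$-tangle of Figure~\ref{fig:addcomptangle}, chosen to carry a second link component: a closed loop that crosses the through-strand but, being traversed straight through at each crossing, is a \emph{distinct} link component. Connect-summing $\Phi$ attaches its through-strand to an existing component while leaving this loop as a separate closed-circle component, and the loop is entirely interior to the isolating dual $2$-cycle, so any link diagram that contains $\Phi$ as a subtangle has at least two link components. Hence every one-component diagram contains no copy of $\Phi$ and lies in $\mathscr H$, which gives the first bound. Since the unknot, as a link, has exactly one component, the event ``$L$ represents the unknot'' is contained in the event ``$L$ has one component,'' and the second bound follows from the first.

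For the third event I would take $P = T$ to be an asymmetric, two-leg-prime $2$-tangle that is a knotted connect summand, for instance (an asymmetric representative of) the trefoil summand of Figure~\ref{fig:trefcs}. If a link diagram $D$ contains $T$ as a subtangle, then the dual $2$-cycle isolating $T$ exhibits the component of $D$ running through $T$ as a connect sum of a knot with the trefoil; because the monoid of knots under connect sum has no nontrivial units, this component is knotted. A link with a knotted component cannot be a split link of unknots, so every diagram representing such a link contains no copy of $T$ and lies in $\mathscr H$, giving the third bound.

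The asymptotics are thus handed to us by Theorem~\ref{thr:strongpattern}; the substance of the argument is the \emph{topological} bookkeeping that turns combinatorial containment into a statement about the represented link type, and this is where I expect the main obstacle to lie. One must verify that the two chosen tangles genuinely satisfy the attachment hypotheses---two-leg-primeness and asymmetry---and, more importantly, that combinatorial containment has the claimed topological force: that the loop of $\Phi$ survives as a genuinely separate component of the \emph{ambient} diagram regardless of how $\Phi$ is attached, and that containment of $T$ produces a nontrivial connect-sum factor in the knot type of a component that cannot be cancelled by the rest of $D$. The latter is clean precisely because connect sum of knots admits no inverses, so no additional structure in $D$ can untie the trefoil factor.
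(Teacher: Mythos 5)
Your proposal is correct and takes essentially the same route as the paper: the paper likewise invokes the strong Pattern Theorem~\ref{thr:strongpattern} for \(\LinkDia\) (smooth growth from the exact counts, connect-sum attachment via Proposition~\ref{thm:edgerep_viable}) and notes that each of the three events forces a diagram to contain \emph{zero} copies of the pattern, hence to lie in the exponentially rare class \(\mathscr{H}\). The only difference is bookkeeping: the paper uses the single tangle \(\Phi\) of Figure~\ref{fig:addcomptangle}, chosen so that one copy simultaneously rules out all three properties, whereas you split the work between \(\Phi\) (for the component and unknot bounds) and a separate knotted, two-leg-prime summand (for the split-link bound), then take the worst pair of constants.
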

\begin{proof}
  Consider the attachment of tangle \(\Phi\)
  in~\ref{fig:addcomptangle}. This attachment satisfies the hypotheses
  of Theorem~\ref{thr:strongpattern} and the class of rooted link
  diagrams grows smoothly, so there exists \(c > 0\), \(d < 1\), and
  \(N > 0\) so that for all \(n \ge N\),
  \[
    \Prb{(\text{a rooted link diagram \(L\) with \(n\) crossings
        contains \(\le cn\) copies of \(\Phi\)})} < d^n.
  \]
  Now, any of the conditions enumerated in the proposition would imply
  that the rooted link diagram with \(n\) crossings contains precisely
  no copies of the tangle \(\Phi\); but this condition happens with
  probability less than \(d^n\), providing the result.
\end{proof}

\subsubsection{Rooted prime link diagrams} In the case of rooted
\emph{prime} link diagrams, however, connect summation with \emph{any}
nontrivial 2-tangle \(P\) immediately removes the prime
condition. Hence in this case we must use crossing replacement with
4-tangles that are prime, which is viable by
Proposition~\ref{thm:xrep_viable}. We can see then that:
\begin{proposition}
  Almost all rooted prime link diagrams consist of more than one
  component. Additionally, almost all rooted prime link diagrams are
  neither unknots nor split links of unknots.
\end{proposition}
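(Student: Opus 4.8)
The plan is to mirror the proof of the preceding proposition, replacing connect summation (which destroys primeness) by the crossing-replacement attachment of Proposition~\ref{thm:xrep_viable}. First I would recall that the class \(\PrimeLinkDia\) of rooted prime link diagrams grows smoothly: its counts are known exactly through the bijection with simple quadrangulations, so \(\lim_{n}(\primelinkdia_n)^{1/n}\) exists. This is precisely the extra hypothesis needed to upgrade the radius-of-convergence bound of Theorem~\ref{thr:weakpattern} into the ubiquity statement of Theorem~\ref{thr:strongpattern}.

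Next I would fix the \(4\)-tangle \(X\) of Figure~\ref{fig:addprimelinktangle}, chosen to have two kinds of properties at once. Combinatorially, \(X\) must be nontrivial, asymmetric, and at least \(4\)-edge-connected, so that crossing replacement with \(X\) is a viable attachment into \(\PrimeLinkDia\) by Proposition~\ref{thm:xrep_viable}. Topologically, \(X\) should contain a closed loop that is Hopf-linked with one of the strands passing through the tangle. With such an \(X\) in hand, containing even a single copy of \(X\) forces: (i)~an extra closed component, hence more than one link component; (ii)~that the diagram cannot represent the unknot, since it has more than one component; and (iii)~that the represented link is not split, the Hopf-linked pair inside \(X\) being a non-split sublink, so in particular it is not a split link of unknots.

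With the attachment viable and the class growing smoothly, I would apply Theorem~\ref{thr:strongpattern} directly: there exist \(c>0\), \(d<1\), and \(N>0\) so that for all \(n \ge N\),
\[
  \Prb(\text{a rooted prime link diagram with } n \text{ crossings contains fewer than } cn \text{ copies of } X) < d^n.
\]
Finally I would observe that each of the three stated degenerate events---one component, representing the unknot, representing a split link of unknots---forces the diagram to contain \emph{no} copy of \(X\), by the three implications above. Since zero copies is in particular fewer than \(cn\) copies (for \(n \ge 1\)), each such event has probability at most \(d^n\), completing the proof exactly as in the preceding proposition.

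The main obstacle lies entirely in the choice and verification of \(X\): one must exhibit a single \(4\)-tangle that is simultaneously suitable for crossing replacement (nontrivial, asymmetric, \(4\)-edge-connected) and topologically rich enough (an internal Hopf-linked loop) that one occurrence rules out all three degenerate possibilities at once. Checking \(4\)-edge-connectedness and asymmetry is the delicate part, since these are what make the crossing-replacement inverse well defined and the copies of \(X\) pairwise disjoint, as required in Proposition~\ref{thm:xrep_viable}; the topological implications, by contrast, follow from standard facts about non-split links once the internal Hopf link is present.
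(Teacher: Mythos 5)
Your proposal is correct and follows essentially the same route as the paper: crossing replacement with the tangle \(X\) of Figure~\ref{fig:addprimelinktangle} (viable by Proposition~\ref{thm:xrep_viable}), smooth growth of \(\PrimeLinkDia\) from its exact enumeration, Theorem~\ref{thr:strongpattern} for ubiquity, and the observation that each degenerate event forces zero copies of \(X\). You merely spell out explicitly the topological properties of \(X\) (the internal linked loop) that the paper leaves implicit in the figure.
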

\begin{proof}
  Consider the tangle \(X\) in Figure~\ref{fig:addprimelinktangle}. For a
  rooted prime link diagram to satisfy any of the claims above, it
  would have to not contain a copy of \(X\). However, \(X\) can be
  attached by crossing replacement into the class of rooted prime link
  diagrams in a way satisfying the hypotheses of
  Theorem~\ref{thr:strongpattern}. Furthermore, the class of rooted
  prime link diagrams grows smoothly, leaving the result.
\end{proof}

\subsection{The Pattern Theorem for rooted knot diagrams}
\label{sec:patthmknots}

Unlike classes of rooted link diagrams, \(\KnotDia\) and its
subclasses often do not have exact counting formulas. Hence in these
cases we not only must provide satisfactory attachment schemes for
tangles, we must additionally prove smooth growth of counting
sequences.

\subsubsection{Rooted knot diagrams}
\label{sec:rootedkdpattern}

Provided that the tangle \(P\) consists of precisely one component,
Proposition~\ref{thm:edgerep_viable} the class \(\KnotDia\) of all
rooted knot diagrams is closed under connect sum with \(P\).

So it only remains to prove that the classes of knot diagrams
grows smoothly for pattern theorem results to follow. It suffices to
prove that the class of rooted knot \emph{shadows} grow smoothly as there
exist precisely $2^n$ rooted knot diagrams for each rooted knot shadow
of $n$ crossings.

\begin{theorem} The class \(\KnotShad\) of rooted knot shadows grows
  smoothly. That is, the limit \(\lim_{n\to\infty}{k_n^{1/n}}\) exists
  and is equal to \(1/r(K)\).

  Additionally, the class of reduced rooted knot shadows grows smoothly.
  \label{thr:smoothgrowth}
\end{theorem}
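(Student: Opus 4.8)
The plan is to establish a supermultiplicativity estimate for the sequence $k_n = |\KnotShad_n|$ and then invoke the Wilker--Whittington strengthening of Fekete's lemma from Section~\ref{sec:smooth-growth}. That lemma requires three ingredients: that $k_n^{1/n}$ be bounded above, which is immediate since $\KnotShad_n \subseteq \LinkShad_n$ and $\linkshad_n \sim \tfrac{2}{\sqrt\pi}12^n n^{-5/2}$ (the trivial bound $144$ certainly applies); a function $f$ with $m^{-1}f(m)\to 1$; and the inequality $k_n k_m \le k_{n+f(m)}$ for all large $n,m$. Producing such an inequality, with $f(m)=m$ (or $f(m)=m+k$ for a fixed constant $k$, which the lemma's hypotheses explicitly permit), is the entire content of the theorem; the conclusion $\lim_n k_n^{1/n} = \limsup_n k_n^{1/n} = 1/r(K)$ then follows at once, and the same scheme handles the reduced class.

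To build the inequality I would work in the two-leg (long knot shadow) picture of Figure~\ref{fig:treflegs2}: cutting the root edge identifies each rooted knot shadow with a one-strand two-leg shadow carrying a hind leg and a front leg, and this identification is a bijection, so $k_n$ equally counts two-leg knot shadows of $n$ crossings. Given two-leg shadows $\hat D_1 \in \KnotShad_n$ and $\hat D_2 \in \KnotShad_m$, define their \emph{concatenation} by gluing the front leg of $\hat D_1$ to the hind leg of $\hat D_2$ along a new edge $j$, so that the outer legs (the hind leg of $\hat D_1$ and the front leg of $\hat D_2$) become the legs of the result. Because each summand is a single strand, the concatenation is again a single strand, and juxtaposing the two disks keeps it planar, so concatenation is a map $\Phi\colon \KnotShad_n \times \KnotShad_m \to \KnotShad_{n+m}$ that adds crossing numbers.

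The crux is that $\Phi$ is injective for $n,m \ge 1$. The junction edge $j$ is a bridge of the two-leg diagram, and in fact the bridges of a two-leg knot shadow are exactly its connect-sum seams; they occur in a linear order along the strand from the hind leg to the front leg, and the number of crossings lying to the left of a bridge strictly increases as one passes each bridge, since the prime factor between two consecutive bridges has at least one crossing. Consequently there is a unique bridge with exactly $n$ crossings to its left, namely $j$ itself, so cutting the image of $\Phi$ at that bridge recovers $(\hat D_1,\hat D_2)$ unambiguously---this is the same local, choice-free invertibility used for edge replacement in Proposition~\ref{thm:edgerep_viable}(4). Hence $k_n k_m \le k_{n+m}$, and Wilker--Whittington gives the first claim.

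For the reduced class I would apply the identical construction and check that reducedness is preserved: the two junction crossings cannot be disconnecting vertices of the resulting rooted knot shadow because the rest of the strand (the closure edge together with the opposite legs) supplies a return path joining the two summands, while any interior disconnecting vertex would already disconnect $\hat D_1$ or $\hat D_2$. I expect the main obstacle to be precisely this bookkeeping of the seam: one must be certain that the location of the inserted junction is canonically recoverable and that neither single-componentness nor reducedness is spoilt at the join. If the bare concatenation ever failed to preserve a desired property, the safe remedy is to splice in a fixed asymmetric, two-leg-prime (and, for the reduced statement, reduced) connector tangle of $k$ crossings between the summands; its asymmetry makes the seam and the orientation of the two sides canonical, the resulting map lands in $\KnotShad_{n+m+k}$, and $f(m)=m+k$ still satisfies the Wilker--Whittington hypotheses, yielding $\lim_n k_n^{1/n}=1/r(K)$ in every case.
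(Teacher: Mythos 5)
Your proposal is correct and takes essentially the same route as the paper: identify rooted knot shadows with two-leg shadows, concatenate them end-to-end to get the supermultiplicative inequality \(k_n k_m \le k_{n+m}\), invoke the Wilker--Whittington lemma with \(f(m)=m\), and handle the reduced class by noting that the join creates no new disconnecting vertices. Your recovery of the seam via the unique bridge with exactly \(n\) crossings on the hind-leg side is just a rephrasing of the paper's observation that, since \(n\) is known, the separating edge is determined by traversal order, so the two arguments coincide in substance.
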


\begin{proof}
  Observe that \(k_nk_m \le k_{n+m}\) for any pair \(n, m\) as
  given any two knot shadows of size \(n,m\), say \(K_1\) and \(K_2\)
  respectively, we have the injection defined as in
  Figure~\ref{fig:legsum_example} where the shadows are joined
  end-to-end. As \(n\) and \(m\) are known quantities, the separating
  edge between \(K_1\) and \(K_2\) in the product is determined, and
  the inverse is well defined on the image.

  \begin{figure}[h!]
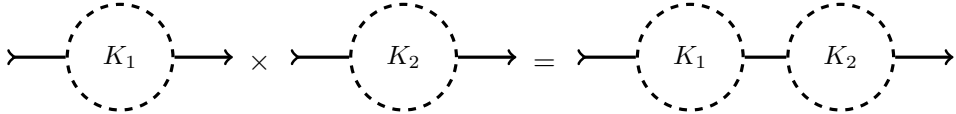
  \centering
    \[
    \begin{aligned}
        \begingroup
        \input{#twoleg_k1}%
        \endgroup
    
    \end{aligned} \times
    \begin{aligned}
        \begingroup
        \input{#twoleg_k2}%
        \endgroup
    
    \end{aligned} =
    \begin{aligned}
        \begingroup
        \input{#knotcomp}%
        \endgroup
    
    \end{aligned}
    \]
    \caption{The injection from \(\KnotShad_n \times \KnotShad_m\)
      into \(\KnotShad_{n+m}\). The edge between \(K_1\) and \(K_2\)
      is the edge connecting the \(n\)th and \((n+1)\)th crossings (by
      traversal order) and provides a well-defined inverse.}
    \label{fig:legsum_example}
  \end{figure}

  The result then follows by the above result on
  super-multiplicative sequences, with \(f(m) = m\). This proof works for both
  the case of general knot diagrams, as well as the case of reduced rooted
  diagrams, as connect summation does not introduce any new isthmi.
\end{proof}

\begin{proposition}
  Almost every rooted (general or reduced) knot diagram is knotted. Furthermore,
  almost every rooted (general or reduced) knot diagram contains any
  1-component, prime 2-tangle $P$ ``linearly often:'' For any such prime
  2-tangle \(P\), there exists \(N \ge 0\) and constants \(d < 1\), \(c > 0\) so
  that for \(n \ge N\),
  \[
    \Prb(\text{a knot diagram \(K\) contains \(\le cn\) copies of
      \(P\) as connect summands}) < d^n.
  \]

  \label{thm:patternthm}
\end{proposition}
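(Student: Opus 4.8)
The plan is to derive the proposition as a direct application of the strong Pattern Theorem (Theorem~\ref{thr:strongpattern}) to the class \(\MapClass = \KnotDia\) (and to its reduced analogue), using the connect-sum attachment already established. Two ingredients are needed to invoke Theorem~\ref{thr:strongpattern}: a viable attachment of \(P\) into the relevant class, and smooth growth of that class. Both are in hand. Since \(P\) is a one-component prime 2-tangle, Proposition~\ref{thm:edgerep_viable} shows that edge replacement (connect sum with \(P\)) is a viable attachment for rooted knot diagrams, and, when \(P\) is reduced, for reduced rooted knot diagrams; the one-component hypothesis is precisely what guarantees that connect summing \(P\) leaves the number of link components unchanged, so knot diagrams map to knot diagrams. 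Smooth growth is supplied by Theorem~\ref{thr:smoothgrowth}: the rooted knot shadows (and reduced rooted knot shadows) grow smoothly, and because there are exactly \(2^n\) rooted knot diagrams over each \(n\)-crossing rooted knot shadow, the diagram counts \(2^n k_n\) grow smoothly as well.

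First I would dispatch the quantitative ``linearly often'' claim. Applying Theorem~\ref{thr:strongpattern} with \(Q = P\), the subclass \(\mathscr{H}\) of knot diagrams containing fewer than \(c\,e(M)\) pairwise-disjoint copies of \(P\) satisfies \(h_n/m_n < d^n\) for suitable constants \(c>0\), \(d<1\), and all \(n \ge N\). Reading \(h_n/m_n\) as a probability under the uniform measure on \(n\)-crossing rooted knot diagrams (and absorbing the edge-to-crossing factor into the constant \(c\)) gives exactly
\[
  \Prb(\text{\(K\) contains \(\le cn\) copies of \(P\) as connect summands}) < d^n,
\]
which is the second assertion; running the identical argument inside the reduced class yields the reduced version, since edge replacement introduces no new isthmi when \(P\) is reduced.

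To obtain the first assertion I would specialize \(P\) to a prime one-component 2-tangle whose closure is a nontrivial knot---concretely the trefoil connect summand of Figure~\ref{fig:trefcs}. By the ``linearly often'' conclusion, once \(n\) is large every \(n\)-crossing rooted knot diagram outside a set of probability \(< d^n\) contains at least \(cn \ge 1\) copies of \(P\) as connect summands. A single such copy already forces nontriviality: a diagram containing \(P\) as a connect summand represents the knot type \(K' \# T\), where \(T\) is the trefoil, and the connect sum of knots is the unknot if and only if both summands are unknots. Hence the represented knot type is knotted, and almost every rooted (general or reduced) knot diagram is knotted.

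The hard part is not the pattern-theorem machinery, which is now black-boxed, but confirming that a concrete tangle \(P\) meets every hypothesis simultaneously and that its diagrammatic role translates into a topological one. Specifically, I must verify that the chosen trefoil summand is genuinely one-component, two-leg-prime, and asymmetric---the asymmetry and two-leg-primality being exactly what the invertibility and disjointness clauses of Proposition~\ref{thm:edgerep_viable} require; a generic rooting of \(P\) supplies asymmetry, and any nontrivial prime summand supplies two-leg-primality---and that a diagrammatic connect summand \(P\) does indeed contribute its closure as a topological connect-sum factor of the knot type of the whole diagram. The remaining bookkeeping, the smooth-growth transfer from shadows to diagrams, is immediate from the exact \(2^n\) factor.
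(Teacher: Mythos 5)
Your proposal is correct and follows essentially the same route as the paper: invoke the strong Pattern Theorem (Theorem~\ref{thr:strongpattern}) together with smooth growth (Theorem~\ref{thr:smoothgrowth}) and the connect-sum attachment of Proposition~\ref{thm:edgerep_viable} to get the ``linearly often'' bound, then specialize \(P\) to the prime trefoil tangle of Figure~\ref{fig:trefcs} so that a single connect summand forces knottedness, with the reduced case handled identically. Your write-up is somewhat more explicit than the paper's (spelling out the \(2^n\) shadow-to-diagram transfer, the edge-to-crossing normalization, and the topological fact that a connect sum with a trefoil is nontrivial), but the underlying argument is the same.
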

\begin{proof}
  The first statement will follow immediately from the second, given a
  prime 2-tangle corresponding to a prime knot diagram which is not an
  unknot such as the prime trefoil tangle $T$ in
  Figure~\ref{fig:trefcs}. The second is a corollary of
  Theorems~\ref{thr:strongpattern} and~\ref{thr:smoothgrowth}: Let
  \(P\) be a prime 2-tangle which can be found as a connect summand of
  a knot diagram (\textit{i.e.}, it has one link component). If \(m_n\) is the
  number of knot diagrams, then there exists \(c > 0\), \(d < 1\), and
  \(N > 0\) so that for all \(n \ge N\), \(\frac {h_n}{m_n} < d^n\),
  where \(h_n\) is the number of knot diagrams which contain at most
  \(cn\) copies of \(P\) as connect summands. This ratio is precisely
  the probability in the second statement.

  The proof follows for reduced rooted knot diagrams precisely the same.
\end{proof}

\subsubsection{Rooted prime knot diagrams}
\label{sec:rootedprimekdpattern}

We can also show that, given a prime 4-tangle \(P\) with reasonable assumptions,
we can obtain a pattern theorem for \(P\) inside of the class \(\PrimeKnotShad\)
of prime knot diagrams. The viable attachment here is crossing replacement
discussed earlier.

It does however remain to be shown that the class \(\PrimeKnotShad\) grows
appropriately smoothly. We only need apply a slight modification to
the argument prior.

\begin{theorem}
  The class \(\PrimeKnotShad\) of prime knot shadows grows smoothly.
\end{theorem}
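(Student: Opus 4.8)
The plan is to reprise the proof of Theorem~\ref{thr:smoothgrowth} essentially verbatim, replacing only the end-to-end connect-sum product (which destroys primality) by a composition that joins two prime knot shadows along \emph{four} edges. Writing \(p_n = |\PrimeKnotShad_n|\), it suffices to produce, for all sufficiently large \(n,m\), an injection
\[
  \PrimeKnotShad_n \times \PrimeKnotShad_m \hookrightarrow \PrimeKnotShad_{n+f(m)}
\]
with \(f(m) = m - 2\), so that \(p_n p_m \le p_{n+f(m)}\); since \(f(m) = m + k\) for the fixed constant \(k = -2\) and \(p_n^{1/n}\) is bounded above (by the universal bound \(144\) for planar maps), the Wilker--Whittington strengthening of Fekete's lemma then yields that \(\lim_{n\to\infty} p_n^{1/n}\) exists and equals \(1/r(\PrimeKnotShad)\).

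To build the injection, take rooted prime knot shadows \(K_1, K_2\) and delete a canonically chosen crossing from each (for definiteness the head crossing of the root edge), producing \(4\)-tangles \(T_1, T_2\) with \(n-1\) and \(m-1\) crossings. Because \(K_1, K_2\) are knot shadows, each \(T_i\) is a two-strand tangle whose exterior arcs, read cyclically, are colored red-red-blue-blue. As in the discussion of cyclic composition of \(4\)-tangles, there is then a matching of the exterior arcs of \(T_1\) to those of \(T_2\) for which the cyclic composition \(T_1 \#\, T_2\) has exactly one link component; I select this matching canonically by aligning the designated arcs inherited from the two roots. Rooting the result by a canonical surviving edge (determined from \(K_1\)'s root by traversal order) gives a rooted knot shadow with \((n-1)+(m-1) = n+m-2\) crossings, \textit{i.e.}\ a candidate element of \(\PrimeKnotShad_{n+f(m)}\) once primality is checked. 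Injectivity is built in by the canonicity of every choice: given the output and the \emph{known} sizes \(n,m\), the four gluing edges form the unique \(4\)-edge cut separating an \((n-1)\)-crossing side from an \((m-1)\)-crossing side, so the equatorial circle is located, the deleted crossings are reconstituted at the four legs of each side, and \((K_1,K_2)\) is recovered. Rootedness makes all maps rigid, so there is no ambiguity from symmetry; this is the exact analogue of the ``the separating edge is determined by traversal order'' argument in Theorem~\ref{thr:smoothgrowth}.

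The hard part is verifying that the composite is genuinely \emph{prime}, \textit{i.e.}\ admits no disconnecting \(2\)-edge cut. Since \(T_1\) and \(T_2\) are joined along four edges, no \(2\)-edge cut can separate the two sides; and a \(2\)-edge cut lying entirely in one side would, via the planar separating-circle picture, descend to a \(2\)-edge cut of \(K_1\) or \(K_2\), contradicting their primality. The subtlety I expect to be the real obstacle is that deleting a crossing which straddles a pre-existing \(4\)-edge cut of \(K_i\) can \emph{merge} that cut into a \(2\)-edge cut of the resulting tangle, so the deleted crossing must be chosen so that \(T_i\) is itself \(4\)-edge-connected. I would either show that such a crossing always exists for a prime shadow (and choose it canonically), or sidestep the issue entirely by instead encasing the two-leg form of \(K_2\) inside a fixed asymmetric, \(4\)-edge-connected ``capsid'' frame and performing \emph{crossing replacement} into \(K_1\). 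The latter route lets me quote Proposition~\ref{thm:xrep_viable} directly, whose property (2) already guarantees that crossing replacement by an asymmetric, at-least-\(4\)-edge-connected two-strand tangle keeps the output in the class of prime knot diagrams; this only changes \(f(m)\) to \(m + k\) for a different fixed \(k\), which the Wilker--Whittington hypotheses still accommodate.
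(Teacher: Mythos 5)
Your skeleton is the paper's own (a supermultiplicative injection fed into the Wilker--Whittington strengthening of Fekete's lemma, with \(f(m)=m+k\)), but your composition runs in the opposite direction: the paper \emph{adds} two crossings, joining the two-leg forms of \(K_1\) and \(K_2\) through a small clasp so that \(f(m)=m+2\) (Figure~\ref{fig:primeshadcomp}), whereas you \emph{delete} a crossing from each and glue the resulting \(4\)-tangles, \(f(m)=m-2\). That reversal is legitimate in principle, but both places where you defer the work are genuine gaps, and both of your proposed repairs fail. First, a crossing whose deletion leaves a \(4\)-edge-connected tangle need not exist: in the \((2,k)\)-torus knot shadow (\(k\) odd, e.g.\ the trefoil), deleting \emph{any} crossing leaves a chain of crossings joined by parallel edge pairs, and each such pair splits the tangle with two legs on each side; so your ``choose the crossing canonically'' route is vacuous for an infinite family of prime shadows. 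Second, the capsid route is unsalvageable: the two-leg form of \(K_2\) meets any enclosing frame in exactly two edges, so the assembled \(4\)-tangle always has a \(2\)-edge cut, hence is never ``at least \(4\)-edge-connected''; Proposition~\ref{thm:xrep_viable} therefore cannot be quoted, and worse, the output diagram inherits that same \(2\)-edge cut and is never prime. This is exactly why the paper abandons connect-sum-style insertion when working with the prime class.

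The irony is that your original construction does work, but for a reason you did not supply. Since every vertex has degree \(4\), every vertex set has an even number of boundary edges; a short case analysis using this parity shows that any \(2\)-edge cut of the composite must cut off, inside one of the two tangles, a piece incident to at most one leg, and such a piece is bounded in the original \(K_i\) by at most two edges (the interior cut edges plus at most one leg edge), contradicting primality of \(K_i\). In particular the case you feared---a \(2\)-edge cut of \(T_i\) separating its legs two-and-two---never disconnects the composite at all, because both halves remain attached to the other tangle through their legs; so the hypothesis you were trying to arrange is unnecessary. Separately, your injectivity argument has a hole: the gluing edges need not be the \emph{unique} \(4\)-edge cut with an \((n-1,m-1)\) split, since \(K_1\) may itself contain a \(4\)-edge cut isolating an \((m-1)\)-crossing pocket disjoint from the deleted crossing, giving the composite two competing cuts of identical profile. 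You need a root-anchored device to locate the gluing; compare the paper's inverse (Figure~\ref{fig:primeshadcompinv}), in which the added bigon sits at a root-determined arc \(g_4\) and the splitting edge is fixed by traversal order, so that every step of the inverse is deterministic and injectivity is immediate.
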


\begin{proof}
  For any \(n > 3\) are assured that a prime diagram will have
  \emph{no} monogons, and we can define a composition on prime
  diagrams as in Figure~\ref{fig:primeshadcomp}.

  \begin{figure}[hbtp!]
    \centering
    \[
    \begin{aligned}
        \begingroup
        \input{#twoleg_k1prime}%
        \endgroup
    
    \end{aligned} \times
    \begin{aligned}
        \begingroup
        \input{#twoleg_k2prime}%
        \endgroup
    
    \end{aligned} =
    \begin{aligned}
        \begingroup
        \input{#twoleg_xprime}%
        \endgroup
    
    \end{aligned}
    \]
    \caption{The composition \(\times\) for prime shadows from \(\PrimeKnotShad_n
      \times \PrimeKnotShad_m\) into \(\PrimeKnotShad_{n+m+2}\).}
    \label{fig:primeshadcomp}
  \end{figure}


  \begin{figure}[hbtp!]
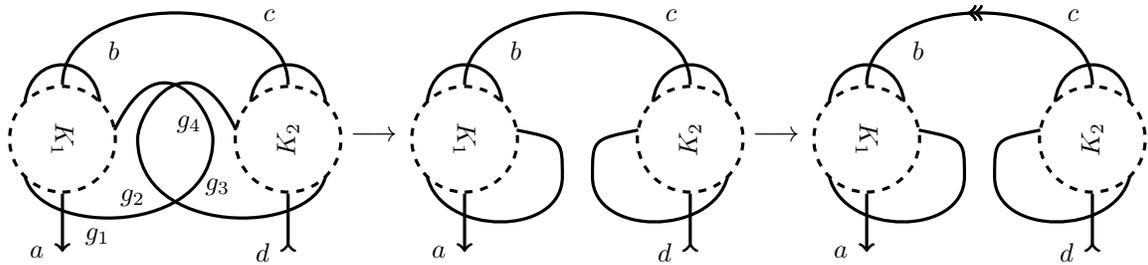

    \centering
    \[
      \begin{aligned}
        \begingroup
        \input{#twoleg_xprime}%
        \endgroup
    
      \end{aligned}
      \longrightarrow
      \begin{aligned}
        \begingroup
        \input{#twoleg_xprimerii}%
        \endgroup
    
      \end{aligned}
      \longrightarrow
      \begin{aligned}
        \begingroup
        \input{#twoleg_xprimechop}%
        \endgroup
    
      \end{aligned}
    \]
    \caption{The inverse of the composition \(\times\) for prime shadows. If the
      map is in the image of \(\times\), the arc \(g_4\) uniquely determines the
      bigon to remove, and the disconnecting edge \((bc)\) is determined.}
    \label{fig:primeshadcompinv}
  \end{figure}
  
  This operation adds precisely two vertices and is invertible on its image. We
  will define the inverse of a diagram \(D\) to be the diagram obtained by
  performing a ``flat Reidemeister II move'' to delete the bigon containing the
  arc labeled \(g_4\) and then by deleting the edge \((bc)\) between vertices
  indexed \(m\) and \(m+1\) by traversal order starting at the tail \(d\),
  \emph{if such a diagram exists and is valid}. This process is depicted in
  Figure~\ref{fig:primeshadcompinv}. This inverse proves that
  \[ p_np_m \le p_{n+f(m)},\]
  where \(f(m) = m+2\). This, together with the above theorem on
  super-multiplicativity completes the proof.
\end{proof}

Hence,

\begin{proposition}
  Almost every rooted prime knot diagram is knotted. Furthermore, almost every
  rooted prime knot diagram contains any 2-component, prime 4-tangle $P$
  ``linearly often:'' For any such prime 4-tangle \(P\), there exists \(N \ge
  0\) and constants \(d < 1\), \(c > 0\) so that for \(n \ge N\),
  \[
    \Prb(\text{a prime knot diagram \(K\) contains \(\le cn\) copies of
      \(P\)}) < d^n.
  \] \label{thm:primepatternthm}
\end{proposition}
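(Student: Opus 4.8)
The plan is to combine the two ingredients the paper has already established for other classes: a viable attachment scheme (here, crossing replacement, shown viable for prime knot diagrams in Proposition~\ref{thm:xrep_viable}) and smooth growth of the counting sequence (established for $\PrimeKnotShad$ in the theorem immediately preceding). Once both hypotheses hold, Proposition~\ref{thm:primepatternthm} follows as a direct corollary of the strong Pattern Theorem~\ref{thr:strongpattern}, exactly mirroring the proof of Proposition~\ref{thm:patternthm} for rooted knot diagrams.

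First I would fix a $2$-component, prime $4$-tangle $P$ which is asymmetric and at least $4$-edge-connected, so that crossing replacement is a viable attachment of $P$ into the class $\PrimeKnotDia$ by Proposition~\ref{thm:xrep_viable} (the two-component condition on $P$ is what guarantees the resulting diagram still has a single link component, keeping us inside $\PrimeKnotDia$). Since $\PrimeKnotShad$ grows smoothly and there are exactly $2^n$ prime knot diagrams to each prime knot shadow of $n$ crossings, the class $\PrimeKnotDia$ grows smoothly as well; thus all hypotheses of Theorem~\ref{thr:strongpattern} are met. Applying that theorem yields constants $c>0$, $d<1$, and $N>0$ such that for $n\ge N$ the fraction of prime knot diagrams containing fewer than $cn$ disjoint copies of $P$ is less than $d^n$, which is precisely the stated probability bound.

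For the first assertion, that almost every rooted prime knot diagram is knotted, I would instantiate $P$ to be a prime $4$-tangle whose closure is a nontrivial (knotted) prime knot diagram. Any unknotted prime knot diagram fails to contain even a single copy of such a $P$ as a subtangle (by the invertibility and disjointness guaranteed in the viable-attachment conditions, the presence of one copy of $P$ forces the represented knot type to be composite with a knotted summand, hence nontrivial); so the event ``the diagram is unknotted'' is contained in the event ``the diagram contains $\le cn$ copies of $P$,'' which has probability at most $d^n$. This gives exponential decay of the unknotting probability and completes the proposition.

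The main obstacle I expect is purely in the setup rather than the asymptotics: one must exhibit a concrete prime $4$-tangle $P$ that simultaneously has exactly two strands, is asymmetric, is at least $4$-edge-connected, and has a knotted closure, and then verify that crossing replacement with this specific $P$ genuinely preserves the prime-knot-diagram class (the subtlety being that crossing replacement inserts the ``capsid'' tangle $P_\sigma$ carrying $P$, so the two-component hypothesis must be checked to ensure the single-component—i.e.\ knot—condition survives). Everything downstream of verifying these hypotheses is a verbatim application of Theorem~\ref{thr:strongpattern}, so the argument is short once the attachment is in hand.
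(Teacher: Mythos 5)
Your overall strategy is exactly the paper's: establish the pattern-theorem half by combining the viability of crossing replacement (Proposition~\ref{thm:xrep_viable}) with the smooth growth of \(\PrimeKnotShad\) (the theorem immediately preceding), feed both into Theorem~\ref{thr:strongpattern}, and then obtain knottedness by specializing to a single well-chosen tangle --- the same two-step structure as the paper's proof, which runs ``nearly identical to that of Proposition~\ref{thm:patternthm}, except with crossing replacement instead of connect summation.''

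However, your knottedness step contains a genuine flaw. You ask for \(P\) ``whose closure is a nontrivial (knotted) prime knot diagram'' and claim that one copy of such a \(P\) forces knottedness ``by the invertibility and disjointness guaranteed in the viable-attachment conditions.'' Neither half of this is right. Invertibility and disjointness are map-theoretic conditions about identifying and removing copies of \(P\); they say nothing about the knot type of a diagram containing \(P\). And ``knotted closure'' is not a sufficient condition: a \(2\)-strand \(4\)-tangle has inequivalent closures, and a diagram containing \(P\) as a subtangle completes it by an arbitrary complementary \(4\)-tangle, not by the particular closure you tested. Concretely, the \(4\)-tangle obtained by deleting one crossing from a knotted diagram has that knot as one closure, yet reconnecting its legs the other way can yield an unknot, so diagrams containing such a tangle need not be knotted. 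What actually forces knottedness --- and what the paper's tangle in Figure~\ref{fig:primeknottangle} is built to do --- is that \(P\) carries a knotted connect summand tied locally into one of its strands (there, a \(3_1\), as with the trefoil tangle of Figure~\ref{fig:trefcs}). Then any knot diagram containing \(P\) represents a knot type with a \(3_1\) summand and is therefore nontrivial no matter how the rest of the diagram closes it. Replace ``knotted closure'' by ``contains a knotted local \(2\)-tangle summand in one strand'' (keeping \(P\) asymmetric, \(2\)-strand, prime, and at least \(4\)-edge-connected so Proposition~\ref{thm:xrep_viable} applies), and the rest of your argument goes through as written.
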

\begin{proof}
  \begin{figure}[hbtp!]
    \centering
        \begingroup
        \input{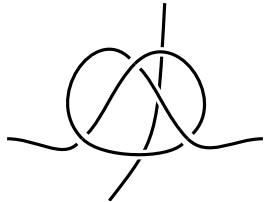}%
        \endgroup
    
    \caption{A prime 4-tangle whose insertion into a diagram guarantees
      knottedness, as it adds a \(3_1\) connect sum component.}
    \label{fig:primeknottangle}
  \end{figure}
  The proof of the latter statement is nearly identical to that of
  Proposition~\ref{thm:patternthm}, except with crossing replacement instead of
  connect summation. A 4-tangle whose insertion guarantees that a diagram be
  knotted is depicted in Figure~\ref{fig:primeknottangle}, from which the former
  statement follows.
\end{proof}

\section{Asymmetry of diagrams}
\label{sec:asymmetry}

It is a theorem of Richmond and Wormald~\cite{Richmond19951} that
``almost all maps are asymmetric'', in classes of maps which have a
pattern theorem. Indeed, this theorem applies to classes of shadows
considered in this paper. Notice then that classes of diagrams then
too must be asymmetric as the decoration imposes additional
constraints on symmetry.

Say a tangle shadow \(P\) is \emph{free} in a class of rooted knot
shadows \(\mathscr C\) if any knot shadow obtained by removing a copy
\(P_1\) of \(P\) from a shadow \(K\) and re-attaching \(P\) in any
fashion such that the exterior legs of \(P\) match up with the loose
strands of \(K \setminus P_1\) is in \(\mathscr C\).

We then restate the theorem as it pertains to shadows:

\begin{untheorem}[Richmond-Wormald 1995]
  Let \(\mathscr C\) be a class of rooted shadows. Suppose that there is a tangle shadow \(P\) such that for all shadows in \(\mathscr C\), all copies of \(P\) are pairwise disjoint and such that \(P\)
  \begin{enumerate}
  \item has no reflective symmetry in the plane,
  \item satisfies the hypotheses for the Pattern Theorem~\ref{thr:strongpattern} for \(\mathscr C\), and
  \item is free in \(\mathscr C\)
  \end{enumerate}
  Then the proportion of \(n\)-crossing shadows in \(\mathscr C\) with
  nontrivial automorphisms (that need not preserve neither root nor
  orientation of the underlying surface) is exponentially small.
\end{untheorem}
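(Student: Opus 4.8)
Given a class $\mathscr{C}$ of rooted shadows and a tangle shadow $P$ that (1) has no reflective symmetry in the plane, (2) satisfies the hypotheses of the strong Pattern Theorem for $\mathscr{C}$, (3) is free in $\mathscr{C}$, show that the proportion of $n$-crossing shadows in $\mathscr{C}$ with a nontrivial automorphism (not required to preserve root or surface orientation) is exponentially small.

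**My plan.** The core idea: symmetries are "rigid" — a nontrivial automorphism permutes the copies of $P$ inside a shadow, and freeness lets me break that symmetry by locally re-decorating one copy of $P$. So I'd build an injection from the symmetric shadows into an auxiliary class, and count.

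Let me sketch the steps.

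Let $\mathscr{S}_n$ be the $n$-crossing shadows in $\mathscr{C}$ with a nontrivial automorphism, and let $\mathscr{U}_n$ be those containing at least $cn$ disjoint copies of $P$ (the ubiquitous regime), where $c>0$ comes from the Pattern Theorem. By Theorem~\ref{thr:strongpattern}, $|\mathscr{C}_n \setminus \mathscr{U}_n| < d_0^n |\mathscr{C}_n|$ for some $d_0 < 1$; so it suffices to bound $|\mathscr{S}_n \cap \mathscr{U}_n|$ exponentially relative to $|\mathscr{C}_n|$. From now on fix $K \in \mathscr{S}_n \cap \mathscr{U}_n$ with nontrivial automorphism $\phi$ and at least $cn$ disjoint copies $P_1, \dots, P_m$ of $P$ (with $m \ge cn$).

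The first structural step is to show $\phi$ acts on the copies of $P$ essentially as a permutation. Because all copies of $P$ are pairwise disjoint (hypothesis) and $\phi$ is a graph automorphism of the embedded map, $\phi$ must send each copy $P_i$ to another copy $\phi(P_i) = P_{\sigma(i)}$ — an automorphism preserves the combinatorial substructure that identifies a copy of $P$ (its dual cycle and interior), so copies are permuted among themselves. The subtlety is orientation: $\phi$ need not preserve surface orientation, so $\phi$ may carry $P_i$ to $P_{\sigma(i)}$ via an orientation-reversing identification, i.e. a reflection of $P$. Here hypothesis~(1) is the crucial lever: since $P$ has no reflective symmetry in the plane, an orientation-reversing map cannot fix any copy of $P$ setwise while respecting its internal structure.

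The counting argument is then a marking/injection. The idea is that a symmetric $K$ is "wasteful": I can produce many distinct shadows by modifying the copies of $P$, and symmetry forces these modifications to come in orbits. Concretely, consider replacing each copy $P_i$ either by $P$ or by a fixed reflected-or-modified variant $P'$ (which, by freeness, keeps the result in $\mathscr{C}$). This gives $2^m \ge 2^{cn}$ shadows from each orbit of the modification. The presence of the automorphism $\phi$ means the "all-$P$" configuration $K$ is counted with extra symmetry, but the point is the reverse: among all shadows in $\mathscr{U}_n$, the symmetric ones can be injected into pairs (or orbits) in such a way that for each symmetric $K$ I can exhibit at least $2^{cn}$ nearby asymmetric shadows in $\mathscr{C}_n$, none of which is symmetric and each of which determines $K$ up to bounded ambiguity. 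Formally I'd fix a canonical orbit representative of the copies under $\sigma$, flip the decoration on exactly the representatives, and record which were flipped; freeness guarantees membership in $\mathscr{C}$, hypothesis~(1) guarantees the flip genuinely breaks $\phi$, and the strong Pattern Theorem guarantees $m \ge cn$ flips are available. This yields $|\mathscr{S}_n \cap \mathscr{U}_n| \le 2^{-cn} |\mathscr{C}_n| \cdot (\text{polynomial overhead})$, hence an exponentially small proportion.

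**The main obstacle.** The delicate part is step two — precisely controlling how $\phi$ acts on the copies of $P$, and ensuring that a single local modification provably destroys \emph{every} possible nontrivial automorphism, not just the one $\phi$ I started with. A shadow might a priori have several automorphisms, and I must break all of them simultaneously; the clean way is to choose the modification \emph{canonically} from the combinatorial data (e.g. flip the copy whose root-distance is lexicographically least, and its orbit under the full automorphism group), so that any surviving automorphism would have to fix a distinguished asymmetric feature — impossible by hypothesis~(1). Making "canonical choice + freeness + no reflective symmetry $\Rightarrow$ rigidity" fully rigorous, while keeping the map planar and inside $\mathscr{C}$, is where the real work lies; the counting that follows is then routine given the $2^{cn}$ lower bound on available modifications.
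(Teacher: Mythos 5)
First, a point of comparison: the paper does not prove this statement at all --- it is quoted from Richmond and Wormald~\cite{Richmond19951} and used as a black box, so your attempt can only be judged against the known argument, not against anything in the paper. Your setup is on the right track: reducing to shadows with at least $cn$ disjoint copies of $P$ via Theorem~\ref{thr:strongpattern}, observing that an automorphism acts on pattern copies, and using the absence of reflective symmetry to rule out copies fixed setwise by orientation-reversing automorphisms are all genuine ingredients of the proof. (One inaccuracy: an orientation-reversing automorphism does not permute the copies of $P$ at all --- it carries them to copies of the mirror image $\bar P$, which need not be among the designated copies; the argument must track $P$- and $\bar P$-occupied sites jointly.)

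The genuine gap is the counting step, and it is not a technicality but the heart of the theorem. Your injection hinges on the clause ``each of which determines $K$ up to bounded ambiguity,'' which is asserted, never established, and false for the scheme you describe: given a target shadow $L$, a candidate preimage is obtained by un-flipping \emph{any} subset of the mirrored copies in $L$, and nothing prevents exponentially many such subsets from yielding symmetric shadows, so the overlap between the target sets $F(K)$ is not polynomially bounded. Recording the flip-set restores injectivity but ruins the count (you are then injecting into pairs, of which there are $2^{cn}|\mathscr{C}_n|$ many), while your fallback of flipping one canonical orbit produces a single target per source, i.e.\ a useless injection showing only $|\mathscr{S}_n|\le|\mathscr{C}_n|$. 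The working argument inverts your logic: rather than mapping symmetric shadows to asymmetric ones, partition the shadows having $j\ge cn$ pattern sites into ``flip-classes'' of size exactly $2^j$ (freeness keeps every flip in $\mathscr{C}$, and no reflective symmetry makes the two states of a site distinguishable), and count the symmetric members \emph{inside} each class: an automorphism induces a nontrivial permutation of the $j$ sites with at most two fixed sites (a finite-order homeomorphism of $S^2$ fixing a disk setwise has a fixed point, and rotations have only two), and it forces the $P$-versus-$\bar P$ assignment to be invariant (orientation-preserving) or anti-invariant (orientation-reversing) under that permutation. Each such permutation therefore admits at most roughly $2^{j/2+2}$ compatible assignments, and there are at most polynomially many candidate permutations (an automorphism is determined by the image of one flag), so the symmetric fraction of each flip-class is $O(n\,2^{-cn/2})$. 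Summing over classes and adding the exponentially small complement of the ubiquitous regime finishes the proof --- this equivariance count is the idea your proposal is missing.
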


It has been suggested without proof
in~\cite{Coquereaux16,Schaeffer2004} that
classes of knot shadows are almost surely asymmetric. We can now prove
these results for \(\FlatKnotDia\) and \(\PrimeKnotShad\) by providing
appropriate tangles \(P\).

\begin{proposition}
  The proportion of knot shadows in \(\KnotShad\) with nontrivial
  automorphisms is exponentially small. Additionally, the proportion
  of knot diagrams in \(\KnotDia\) with nontrivial automorphisms is
  exponentially small.

  This is true for reduced knot shadows as well.
\end{proposition}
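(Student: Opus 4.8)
The plan is to apply the Richmond--Wormald theorem to each of the classes \(\KnotShad\), \(\KnotDia\), and the reduced knot shadows, by exhibiting in each case a single tangle shadow \(P\) meeting the three listed hypotheses. For \(\KnotShad\) I would take \(P\) to be a concrete \(1\)-component, two-leg-prime \(2\)-tangle shadow chosen so that it additionally has no reflective symmetry in the plane; a concrete chiral flat \(2\)-tangle serves, and the first hypothesis is then immediate by inspection. The second hypothesis---that \(P\) satisfies the Pattern Theorem hypotheses for \(\KnotShad\)---is exactly what we have already assembled: Proposition~\ref{thm:edgerep_viable} gives that edge replacement is a viable attachment of a two-leg-prime \(1\)-component \(2\)-tangle into knot diagrams, and its proof also records that all copies of \(P\) are pairwise disjoint (using two-leg-primeness, which is precisely the pairwise-disjointness demanded by Richmond--Wormald), while Theorem~\ref{thr:smoothgrowth} supplies smooth growth of \(\KnotShad\), so that Theorem~\ref{thr:strongpattern} applies.

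The remaining work for the shadow statement is the freeness hypothesis. Here I would argue directly from the structure of \(2\)-tangles: removing one copy \(P_1\) of \(P\) from a knot shadow \(K\) leaves a \(2\)-tangle \(K \setminus P_1\) with two loose strands, and re-inserting \(P\) so that its two legs match these strands can be done in only finitely many ways, each of which is a connect-sum-type attachment. Because \(P\) has a single component, every such re-attachment preserves the number of link components (by the component count for connect summation recorded in Section~\ref{sec:csdefn}) and the result is again a connected \(4\)-regular planar map, hence an element of \(\KnotShad\). Thus \(P\) is free, and Richmond--Wormald yields that the proportion of \(n\)-crossing knot shadows with nontrivial automorphism group is exponentially small.

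To pass from shadows to diagrams, I would use that any nontrivial automorphism of a knot diagram forgets to a nontrivial automorphism of its underlying shadow, since an automorphism acting trivially on the map structure is the identity; hence every symmetric diagram lies over a symmetric shadow. As there are exactly \(2^n\) diagrams over each \(n\)-crossing shadow, the number of symmetric diagrams is at most \(2^n\) times the number of symmetric shadows, and dividing by the total count \(2^n k_n\) shows the proportion of symmetric diagrams is bounded by the (exponentially small) proportion of symmetric shadows. The reduced case is handled identically, taking \(P\) to be additionally reduced: smooth growth of reduced knot shadows is already contained in Theorem~\ref{thr:smoothgrowth}, edge replacement with a reduced \(P\) introduces no new isthmi, and the freeness argument is unchanged.

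I expect the main obstacle to be the freeness verification together with producing an explicit \(P\) that provably lacks reflective symmetry. One must ensure that the chosen flat tangle is genuinely chiral in the plane rather than merely rooted-asymmetric, and that \emph{every} admissible re-attachment---not only the distinguished attachment used for the pattern theorem---lands back in the class. Both points are concrete but require care in exhibiting the tangle and checking the small number of re-attachment matchings.
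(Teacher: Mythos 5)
Your proposal takes essentially the same route as the paper: it too applies Richmond--Wormald with a concrete prime, reduced, one-component 2-tangle (exhibited in its Figure~\ref{fig:asymm2tangle}) attached by connect sum, with the pattern-theorem hypotheses supplied by Proposition~\ref{thm:edgerep_viable} and Theorem~\ref{thr:smoothgrowth}, freeness following exactly as you argue from the tangle having a single component, and the reduced case handled identically. The only differences are in detail: the paper displays the chiral tangle explicitly (resolving the one obstacle you flag), and it dispatches the diagram case via the earlier remark that decorations only further constrain symmetry, which your \(2^n\)-counting argument simply makes precise.
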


\begin{proof}

\begin{figure}[h!]
  \centering
        \begingroup
        \input{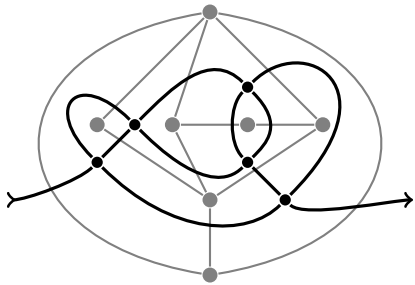}%
        \endgroup
    
  \caption{The (prime, reduced) 2-tangle \(P\) (black) and its dual
    quadrangulation (gray).}
  \label{fig:asymm2tangle}
\end{figure}
Let \(P\) be the 2-tangle in Figure~\ref{fig:asymm2tangle}. \(P\) is a viable
candidate for the Pattern Theorem for knot shadows under connect sum attachment.
Furthermore \(P\) has no reflective symmetry by inspection, and any of the ways
of attaching \(P\) keep the object in the class of knot shadows (as \(P\) is a
2-tangle consisting of one component). This proves that knot shadows are
asymmetric. The proof for reduced knot shadows is identical.
\end{proof}

\begin{proposition}
  The proportion of prime knot shadows in \(\PrimeKnotShad\) with
  nontrivial automorphisms is exponentially small. Additionally, the
  proportion of knot diagrams in \(\PrimeKnotDia\) with nontrivial
  automorphisms is exponentially small.
  \label{thm:primeknotasymm}
\end{proposition}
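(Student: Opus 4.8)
The plan is to mirror the proof of the preceding proposition, replacing the connect-sum attachment with crossing replacement and supplying a prime $4$-tangle in place of the $2$-tangle of Figure~\ref{fig:asymm2tangle}. First I would exhibit a concrete prime $4$-tangle $P$ consisting of precisely two strands that is nontrivial, at least $4$-edge-connected, and carries no reflective symmetry in the plane. Such a $P$ can be obtained as a small modification of the prime $4$-tangle of Figure~\ref{fig:primeknottangle} (or of the capsid interior of Figure~\ref{fig:psigmadef}), decorated so that its four legs are colored red-red-blue-blue and so that the tangle admits no mirror symmetry; the absence of reflective symmetry is then checked by inspection of the figure.

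Next I would verify the three hypotheses of the Richmond--Wormald theorem for the class $\mathscr{C} = \PrimeKnotShad$. Hypothesis (1), absence of reflective symmetry, holds by construction. Hypothesis (2), that $P$ satisfies the Pattern Theorem~\ref{thr:strongpattern} for $\PrimeKnotShad$, follows from Proposition~\ref{thm:xrep_viable}, which establishes that crossing replacement by a two-strand prime $4$-tangle is a viable attachment into the class of prime knot diagrams, together with the theorem that $\PrimeKnotShad$ grows smoothly. The disjointness analysis inside the proof of Proposition~\ref{thm:xrep_viable} also supplies the standing assumption of the Richmond--Wormald theorem that all copies of $P$ in a prime knot shadow are pairwise disjoint.

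The essential work lies in hypothesis (3), freeness of $P$ in $\PrimeKnotShad$. By definition I must show that whenever excising a copy $P_1$ of $P$ from a prime knot shadow $K$ and regluing $P$ by some matching of its four legs to the loose strands of $K \setminus P_1$ yields a one-component shadow, that shadow is again prime. The two-strand structure of $P$ and of the complementary $4$-tangle $K \setminus P_1$, together with the component-counting discussion of Section~\ref{sec:moretangcomp}, determines exactly which leg matchings produce a single link component, so it remains only to treat those matchings. For each such matching I would establish primeness by ruling out a separating pair of edges: any $2$-edge cut of the reassembled shadow cannot lie inside $P$, which is $4$-edge-connected; it cannot lie strictly inside $K \setminus P_1$, since it would then separate the reassembled shadow with $P_1$ on one side and hence witness a $2$-edge cut of the prime shadow $K$; and it cannot cross the gluing boundary, since the four legs of the $4$-edge-connected tangle $P$ cannot be severed by only two edges. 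This case analysis is the step I expect to be the main obstacle, since ``any fashion'' of reattachment ranges over all admissible leg matchings and one must exclude a separating pair for each.

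Finally, with all three hypotheses verified, the Richmond--Wormald theorem gives that the proportion of $n$-crossing prime knot shadows in $\PrimeKnotShad$ with nontrivial automorphism group is exponentially small. The corresponding statement for prime knot diagrams $\PrimeKnotDia$ then follows exactly as in the preceding proposition: the sign decoration at each crossing only imposes additional constraints on any symmetry of the underlying shadow, so the proportion of symmetric diagrams is bounded above by the proportion of symmetric shadows and is therefore exponentially small as well.
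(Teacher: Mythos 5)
Your overall skeleton --- crossing replacement for the pattern theorem, Richmond--Wormald for asymmetry of shadows, and the decoration argument to pass from shadows to diagrams --- is the same as the paper's. But there is a genuine gap in your freeness verification, located exactly at your choice of leg coloring. You take \(P\) with legs colored red-red-blue-blue, i.e.\ with each strand joining two \emph{adjacent} legs. The paper's proof chooses the opposite convention: its tangle (Figure~\ref{fig:primeknotasymm}) is of red-blue-red-blue type, with each strand joining \emph{opposite} legs, and this choice is the whole point of the proof. If \(P\) is of red-blue-red-blue type, then the complement \(K \setminus P_1\) of any copy of \(P\) inside a knot shadow \(K\) is forced to be of red-red-blue-blue type (gluing two alternating-type tangles yields two link components no matter the matching, so an alternating complement is impossible inside a knot shadow), and by the analysis of Section~\ref{sec:moretangcomp} an alternating-type tangle glued to an adjacent-type tangle yields a single link component under \emph{every} leg matching. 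Hence every re-attachment of \(P\) remains a knot shadow, which is what freeness demands. With your red-red-blue-blue \(P\), the complement can itself be of adjacent type, and then re-gluing \(P\) with an odd cyclic offset, or re-attaching its mirror image, produces a \emph{two-component link shadow}, which is not in \(\PrimeKnotShad\) at all.

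You try to repair this by reading the definition of freeness as quantifying only over those re-attachments that happen to produce one-component shadows. That weakened condition is not what the Richmond--Wormald theorem uses: its proof performs surgery on a symmetric shadow, excising copies of \(P\) and re-attaching them in other fashions (in particular reflected ones --- this is why hypothesis (1) concerns reflective symmetry), and it needs all of the resulting objects to remain in the class \(\mathscr{C}\) where the pattern theorem and the growth estimates apply. With your \(P\), those surgered objects can leave the class of knot shadows entirely, and the counting argument breaks. The fix is simply to choose \(P\) of red-blue-red-blue type, as the paper does; then freeness is immediate from the component-count invariance. I will add that your explicit \(2\)-edge-cut analysis showing that re-attachment preserves primeness is a worthwhile supplement --- the paper's own proof only addresses the component count and leaves primeness implicit --- but it does not compensate for the component-count failure above.
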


\begin{proof}
  Take \(P\) as in Figure~\ref{fig:primeknotasymm}.
  \begin{figure}[h!]
    \centering
        \begingroup
        \input{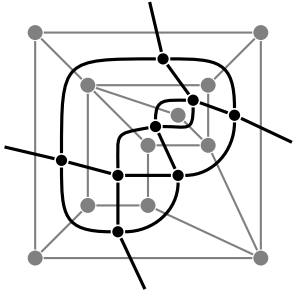}%
        \endgroup
    
    \caption{Choice of 4-tangle \(P\) for proving that prime knot shadows are
      asymmetric.}
    \label{fig:primeknotasymm}
  \end{figure}
  Then \(P\) consists of exactly two link components and is of
  red-blue-red-blue type; any way of replacing a vertex in a knot
  shadow with a 4-tangle curve of red-blue-red-blue type keeps the
  number of link components constant and hence is free. Furthermore,
  \(P\) satisfies the hypotheses for Theorem~\ref{thm:patternthm} together
  with crossing replacement.
\end{proof}
Application of the above theorems provides us with the following
corollary which enables us to transfer any asymptotic numerical
results or sampling results on rooted diagrams to unrooted diagrams.

\begin{corollary}
  Let \(L\) be a uniform random variable taking values in the space
  \(\KnotDia_n\) or \(\LinkDia_n\). Then there exist constants
  \(C, \alpha > 0\) so that \(\Prb(\Aut L \ne 1) < Ce^{-\alpha
    n}\). Hence, rooted diagrams behave like unrooted diagrams and
  there are a.a.s.\ $4n$ rooted diagrams to each unrooted diagram.
\end{corollary}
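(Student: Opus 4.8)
The plan is to deduce the corollary directly from the two asymmetry propositions just proven, using the standard orbit-counting relationship between rooted and unrooted maps. First I would recall the combinatorial bookkeeping: an unrooted diagram $D$ on $n$ crossings has $4n$ arcs (equivalently $2n$ directed edges), and the number of distinct rootings of $D$ equals $4n/|\Aut(D)|$, since the automorphism group acts freely on the set of root choices (a rooted map has trivial automorphism group by definition). Thus a diagram with trivial automorphism group yields exactly $4n$ distinct rooted diagrams, whereas a diagram with $|\Aut(D)| > 1$ yields strictly fewer. This is precisely the sense in which ``there are a.a.s.\ $4n$ rooted diagrams to each unrooted diagram.''

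Next I would invoke the asymmetry results. For link diagrams the relevant statement is the first asymmetry proposition (asymmetry of $\KnotShad$, $\KnotDia$ and their reduced/link analogues), and for knot diagrams it is that same proposition together with Proposition~\ref{thm:primeknotasymm}; in each case the conclusion is that the proportion of $n$-crossing unrooted diagrams with nontrivial automorphism group is \emph{exponentially small}. Concretely, if $u_n$ is the number of unrooted diagrams in the relevant class and $u_n^{\mathrm{sym}}$ is the number of those with $\Aut \ne 1$, then the asymmetry theorem gives constants $d < 1$ and $N$ with $u_n^{\mathrm{sym}}/u_n < d^n$ for $n \ge N$. Since $L$ is chosen uniformly from $\KnotDia_n$ (or $\LinkDia_n$), and the uniform measure on unrooted diagrams assigns equal weight to each, I would identify $\Prb(\Aut L \ne 1) = u_n^{\mathrm{sym}}/u_n$, which is therefore bounded by $d^n$.

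Finally I would convert the exponential bound into the stated form. Setting $\alpha = -\log d > 0$ gives $d^n = e^{-\alpha n}$ for $n \ge N$, and absorbing the finitely many small values $n < N$ into a constant $C$ (large enough that $Ce^{-\alpha n} \ge 1$ on that finite range, which trivially dominates any probability) yields $\Prb(\Aut L \ne 1) < Ce^{-\alpha n}$ for all $n$. The concluding sentence about rooted versus unrooted diagrams then follows: the expected number of rootings per unrooted diagram is $4n\bigl(1 - O(e^{-\alpha n})\bigr)$, so asymptotically almost every unrooted diagram contributes the full complement of $4n$ rooted diagrams, and numerical or sampling statistics computed in the rooted model transfer to the unrooted model up to this factor.

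The main obstacle is essentially bookkeeping rather than genuine difficulty: one must be careful that the asymmetry propositions as stated cover exactly the classes named in the corollary (both $\KnotDia_n$ and $\LinkDia_n$), and that the notion of ``nontrivial automorphism'' used there---automorphisms that need preserve neither the root nor the orientation of the surface---is the same notion appearing in $\Aut L$. Provided the asymmetry results are read with this matching convention, the exponential decay passes through immediately and no new estimate is required.
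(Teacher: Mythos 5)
Your proposal is correct and is essentially the paper's own argument: the paper supplies no proof body for this corollary beyond ``application of the above theorems,'' and your orbit-counting (an asymmetric unrooted diagram has exactly \(4n\) rootings, a symmetric one strictly fewer, so the rooted and unrooted uniform measures agree up to exponentially small error) together with converting the \(d^n\) bound into the form \(Ce^{-\alpha n}\) is precisely what that application amounts to. The one point to tighten is the caveat you flag yourself: the stated asymmetry propositions cover only the knot classes (\(\KnotShad\), \(\KnotDia\), their reduced versions, and \(\PrimeKnotShad\), \(\PrimeKnotDia\)), not \(\LinkDia_n\), so for link diagrams the exponential asymmetry bound must be obtained by applying the Richmond--Wormald theorem directly---using the pattern theorem for rooted link diagrams via the \(\Phi\)-tangle attachment of Section~\ref{sec:patthmlinks} and their smooth growth, which follows from exact enumeration---rather than from a ``link analogue'' of the first proposition, which the paper never states.
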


Indeed, link diagrams with \(n\) vertices are dual to quadrangulations
with \(n+2\) faces; there are \(n+2\) ways of choosing the
``exterior'' root face and then \(4\) ways of rooting the edges around
this chosen face. Hence if \(\tilde \ell_n\), \(\tilde k_n\) are the
counts of unrooted link or knot diagrams we have that in the limit,
\[ \tilde\ell_n \underset{n\to\infty}{\sim} \frac{\ell_n}{4(n+2)}
\text{ and } \tilde k_n \underset{n\to\infty}{\sim}
\frac{k_n}{4(n+2)}.\]


\section{Some numerical results}
\label{sec:randres}

\begin{figure}[hbtp!]
  \centering
        \begingroup
        \input{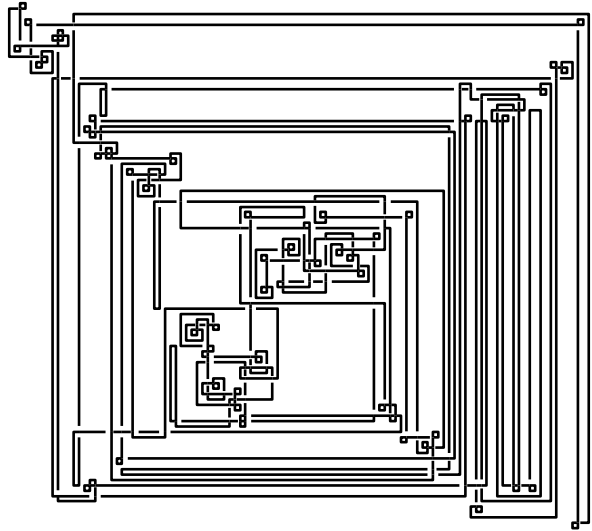}%
        \endgroup
    
  \caption{A randomly sampled knot diagram with 150 crossings,
    presented in a particular orthogonal projection. The drawing
    itself was produced using Culler and Dunfield's \texttt{PLink}
    software~\cite{PLink}.}
  \label{fig:bigrandomdiagram}
\end{figure}

Gilles Schaeffer's \texttt{PlanarMap} software~\cite{SchaefferPlanarMap} is able
to uniformly sample rooted link shadows and rooted prime link shadows by using a
bijection between shadows and objects called \emph{blossom
  trees}~\cite{Schaeffer2004,Schaeffer1997}. We have implemented this
functionality into \texttt{plCurve}~\cite{PlCurve} in order to uniformly sample
rooted (optionally prime) link diagrams. By rejecting link diagrams with more
than one component, we are able to restrict and, with some effort, uniformly
sample rooted (optionally prime) knot diagrams. An example of such a knot diagram
with 150 crossings is presented in Figure~\ref{fig:bigrandomdiagram}.

One may be concerned that the ``asymptotic'' behavior proved in the prior
section only applies to knot diagrams with an absurd number of crossings (in the
sense that no physical knot should be expected to be so complicated). However,
exact and numerical results show that this behavior is attained very quickly.
For example, almost all 10-crossing knot diagrams have no nontrivial
automorphisms! This is exhibited in Figure~\ref{fig:knotauts}.
\begin{figure}[hbtp!]
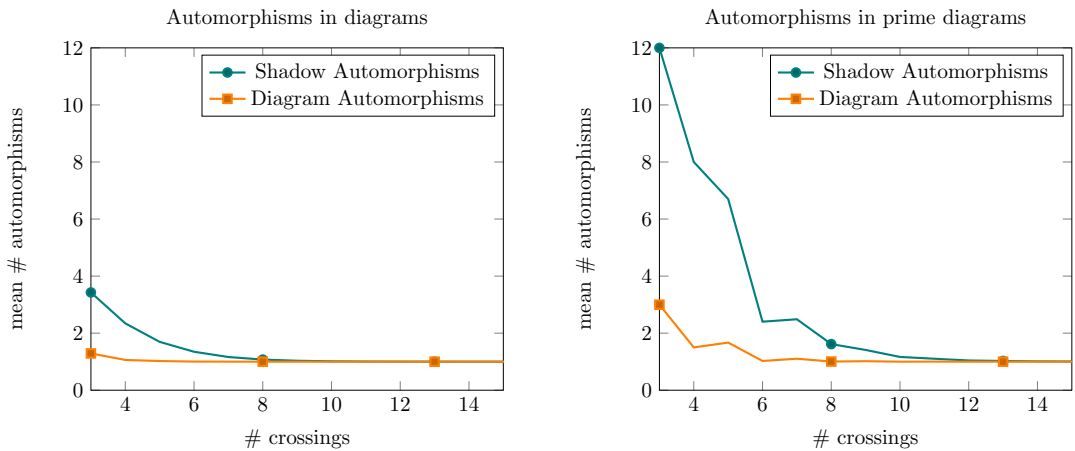

  \centering
  \begin{subfigure}{0.45\linewidth}
    \centering
        \begingroup
        \input{#allknotauts}%
        \endgroup
    
    \label{fig:allknotauts}
  \end{subfigure}
  \begin{subfigure}{0.45\linewidth}
    \centering
        \begingroup
        \input{#primeknotauts}%
        \endgroup
    
    \label{fig:primeknotauts}
  \end{subfigure}
  \caption{Automorphisms in knot diagrams decrease exponentially
    quickly.}
  \label{fig:knotauts}
\end{figure}

We will now consider relative appearances of various knot types among knot
diagrams. Knots were classified by their HOMFLY polynomial~\cite{Freyd85}, which
distinguishes knots of the types studied (\(0_1, 3_1, 4_1, 5_1, 5_2, 3_1\#3_1\))
for diagrams of low crossing number. For larger diagrams, HOMFLY is not
necessarily a complete invariant. However, these clashes are expected to be rare
and hence immaterial for the analysis that follows. Furthermore, it is believed
at least that the HOMFLY polynomial distinguishes the unknot; a counterexample
would disprove the fabled ``Jones Conjecture~\cite{Kauffman87},'' and would be a
monumental discovery by itself.

We note additionally that our data ignores chirality of knot types: The knot
types \(3_1, 5_1, 5_2\) admit both left- and right-handed chiralities (which are
distinguished by HOMFLY), but the chirality is ignored as the probability of the
two mirror-images are identical in the diagram model. Finally, under the
composite knot type \(3_1\#3_1\) is data for both the granny knot, its mirror
image, and the square knot, all of whom have equal probability as well.

This allows us to expand on our results in~\cite{Cantarella2015}. Although we no
longer have \emph{exact} numerics, we can break past the 10-crossing barrier
with relative ease and good precision. Compare the chart in
Figure~\ref{fig:knotprobdata} to Table III in~\cite{Cantarella2015}. 

\begin{figure}[hbtp!]
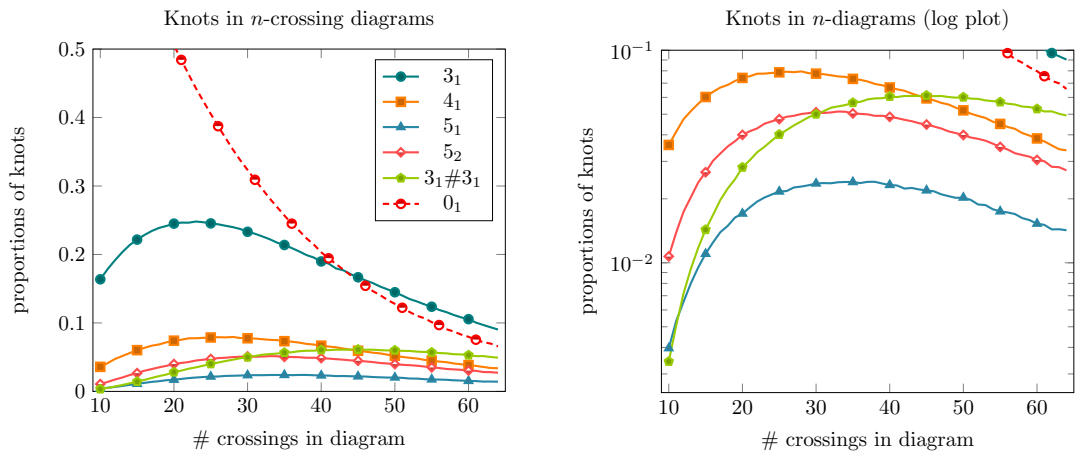

  \centering
  \begin{subfigure}{0.45\linewidth}
    \centering
        \begingroup
        \input{#kgrow1}%
        \endgroup
    
    \label{fig:kgrow1}
  \end{subfigure}
  \begin{subfigure}{0.45\linewidth}
    \centering
        \begingroup
        \input{#kgrow2}%
        \endgroup
    
    \label{fig:kgrow2}
  \end{subfigure}

  \caption{Probabilities of some typical knot types as the number of
    crossings varies from \(n=10\) to \(n=64\). 500,000 samples were
    attempted for each \(n\), although samples were discarded if a
    knot diagram was not produced after 49 attempts. There were,
    \textit{e.g.}\ 258,377 samples for \(n=64\).}
  \label{fig:knotprobdata}
\end{figure}

Of note is the exponential decay of the proportion of unknots \(0_1\) as the
number of crossings grow large. In fact, data shows that the number of (both
left- and right-) trefoils \(3_1\) exceeds that of the unknot \(0_1\) at around
\(n = 44\) crossings. Additionally, the trend for the square and granny knots,
labeled \(3_1\#3_1\), has it decaying at a far slower rate than any of the prime
knot types. This is in line with conjectures for models of random knots that the
growth rates of knot types \(K\) should accrue an additional polynomial growth
factor of \(n\) for each prime knot type in the decomposition of
\(K\)~\cite{Rensburg2011}. Also of note is the so-called ``\(5_1, 5_2\)
inversion,'' where the knot \(5_2\) is more likely than the torus knot \(5_1\).
This was exhibited in the earlier exact data of~\cite{Cantarella2015} and
persists through the entire scope of the data gathered.

\begin{figure}[hbtp!]
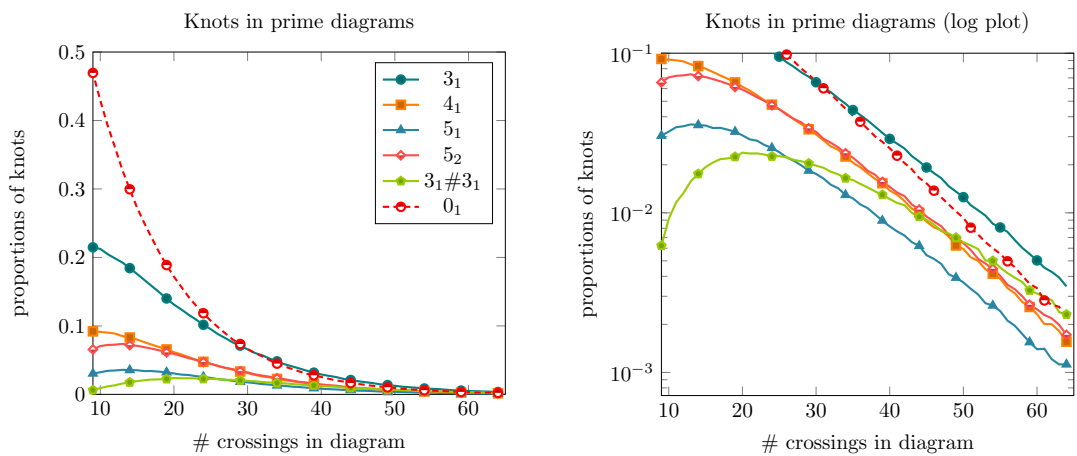

  \centering
  \begin{subfigure}{0.45\linewidth}
    \centering
        \begingroup
        \input{#pkgrow1}%
        \endgroup
    
    \label{fig:pkgrow1}
  \end{subfigure}
  \begin{subfigure}{0.45\linewidth}
    \centering
        \begingroup
        \input{#pkgrow2}%
        \endgroup
    
    \label{fig:pkgrow2}
  \end{subfigure}

  \caption{Probabilities of some typical knot types in prime diagrams as the
    number of crossings varies from \(n=3\) to \(n=64\). 500,000 samples were
    attempted for each \(n\), although samples were discarded if a knot diagram
    was not produced after 49 attempts. There were, \textit{e.g.}\ 496,151
    samples for \(n=64\).}
  \label{fig:primeknotprobdata}
\end{figure}

Our rejection sampling methods additionally apply in the case of sampling knot
diagrams with different graph-theoretic constraints. For instance, it extends to
provide a sampler for prime knot diagrams, which have underlying map structure
which is \(4\)-edge connected. As dictated by
Proposition~\ref{thm:primeknotasymm}, the number of asymmetries of prime knot
diagrams tends to zero exponentially quickly as in Figure~\ref{fig:knotauts}. We
also present data for the proportions knots in prime diagrams in
Figure~\ref{fig:primeknotprobdata}. While the precise numerics differ, notice
that prime diagrams exhibit the same interesting behavior as generic diagrams
discussed prior. Namely, (1) unknots are exponentially rare and become less
prevalent than trefoils (albeit sooner in the case of prime diagrams, occurring
at around \(n=30\) crossings), (2) the composite square and granny knots show
slower decay than knots of prime type, and (3) the \(5_1, 5_2\) inversion.

\begin{figure}
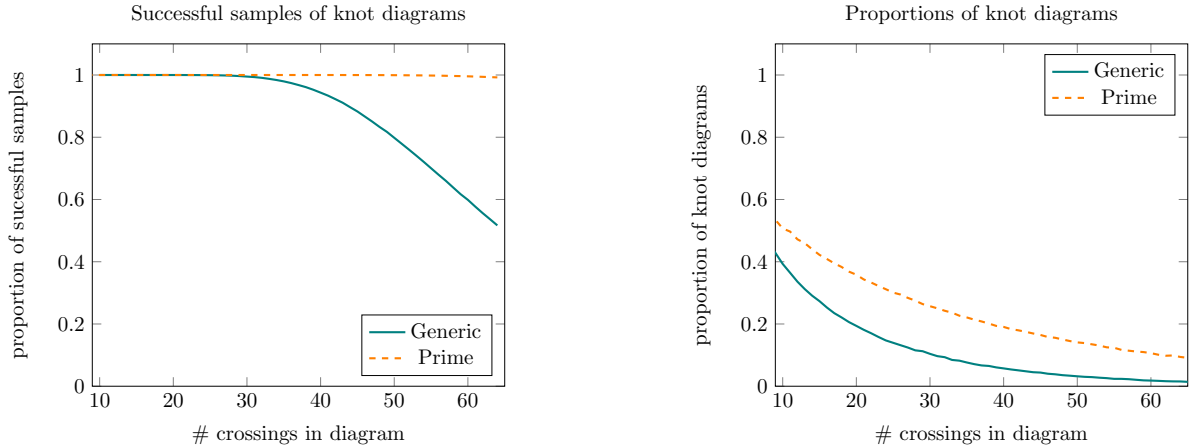

  \centering
  \begin{subfigure}[t]{0.45\linewidth}
    \centering
        \begingroup
        \input{#samplecounts}%
        \endgroup
    
    \caption{Number of successful samples for diagrams in gathering the above
      data. For each of 500,000 samples, a maximum of 49 attempts is made to
      generate a knot diagram, and if unsuccessful, the sample is discarded.}
    \label{fig:samplecounts}
  \end{subfigure} \hfill
  \begin{subfigure}[t]{0.45\linewidth}
    \centering
        \begingroup
        \input{#knotlinkcounts}%
        \endgroup
    
    \caption{Ratio of knot diagrams to link diagrams in the generic and prime
      cases. Data was gathered by sampling 100,000 link diagrams and counting
      the number of samples with precisely one link component.}
    \label{fig:knotlinkcounts}
  \end{subfigure}
  \caption{The exponential decay of knot diagrams among link diagrams makes
    rejection sampling more difficult as the number of crossings increases.}
  \label{fig:samplecountingdata}
\end{figure}

An observation from running the above experiments is that knot diagrams appear
to be more prevalent in prime diagrams than generic knot diagrams! Indeed,
nearly all rejection samples of prime knot diagrams successful (see
Figure~\ref{fig:samplecounts}) as opposed to the 50\% success rate of sampling
generic diagrams of \(64\) crossings. A more precise computation of the
proportion of knot diagrams to link diagrams is exhibited in
Figure~\ref{fig:knotlinkcounts}, where we can see that there is indeed a far
greater proportion of knot diagrams in the prime diagram case to the generic, at
least up to \(65\) crossings.

Hence it is worth summarizing these interesting properties of the prime knot
diagram model. First, prime link diagrams admit exact enumeration like general
link diagrams and are hence easy to sample. In addition, there is a higher
success rate of sampling knot diagrams inside of prime diagrams as opposed to
general diagrams. Second, the difficulty of inserting ``boring'' structure into
prime knot diagrams suggests that there is far more variety in the knot types
which arise than in the general case. So, prime diagrams are a natural class to
find diagrams representing exotic knot types with large minimum crossing number.
Some evidence to this is the fact that \emph{alternating}, prime knot diagrams
are minimal in that they have the fewest number of crossings over all diagram
representations of their knot type. Finally, the smaller size of the class of
prime diagrams (as opposed to the general case) suggests that fewer samples are
required to have a better statistical understanding of the space.

As mentioned before, we are interested in how the knot diagram model
differs from other models of random knotting. Specifically, we note
that if there is universality of knot probability ratios in lattice
models~\cite{Rensburg2011}, our data suggests that it does not extend
to the knot diagram model. For instance, Rechnitzer and Rensburg
speculate that, for all self-avoiding polygon models of knotting, that
the ratio of \(3_1\) knots to \(4_1\) knots should be \(28(\pm
1)\)-to-1, while our data (Figure~\ref{fig:trefeightratio}) suggests that in the knot diagram model, the
ratio is about \(3(\pm 1)\)-to-1.

\begin{figure}[hbtp!]
  \centering
        \begingroup
        \input{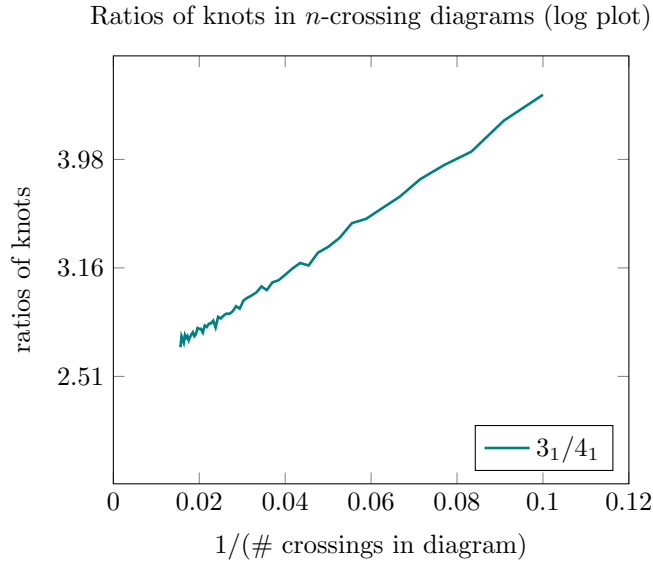}%
        \endgroup
    
  \caption{Experiments suggest that the limiting ratio of \(3_1\) to
    \(4_1\) in the knot diagram model is about \(3(\pm 1)\). Data is
    drawn from the same experiment as Figure~\ref{fig:knotprobdata}.}
  \label{fig:trefeightratio}
\end{figure}

\section{Conclusion}
\label{sec:conclusion}

We have shown a pattern theorem for knot diagrams, which is one of the most
important results when working with models of random knots. As a consequence, we
were able to show that like most other models of knotting used in studying
physical polymers, large diagrams are asymptotically almost surely knotted. This
suggests that the diagram model may describe well certain types of polymer
knotting, although the question of which remains open.

Another consequence of the pattern theorem---that almost all knot diagrams are
asymmetric---greatly simplifies the sampling of knot diagrams uniformly. A
result of this is a rejection sampler for rooted knot diagrams (of several
different graph-theoretic types), which is implemented in
\texttt{plCurve}~\cite{PlCurve} and publicly available. It remains open however
whether it is possible to sample directly (\textit{i.e.}\ without rejection)
from the class of knot diagrams, eliminating the inefficiencies caused by the
exponential decay of knot diagrams in link diagrams. As progress towards this,
together with Rechnitzer we have constructed a Markov Chain Monte Carlo
algorithm for sampling knot diagrams with a distribution limiting on the uniform
distribution across diagrams with fixed crossing number~\cite{Chapman2016:mc}.

Given the similarities to other models of random models, we mention a driving
theme behind studying the diagram model: The ease of computing knot invariants
such as the HOMFLY polynomial~\cite{Freyd85} together with the more topological
definition of diagrams suggests that we can prove new theorems for the knot
diagram model which remain conjectures in classical models of random knotting.
Similarities with space curve models then suggest that there is a way to
transfer these results backwards to their original models.

\section{Acknowledgments}
\label{sec:acknowledgements}

The author is extremely grateful to his advisor Jason Cantarella, for
his support, advice, and for introducing him to the knot tabulation
project (alongside Matt Mastin) and suggesting he prove
Theorem~\ref{thm:knotted}. The author is also grateful to the summer
school on applied combinatorics at the University of Saskatchewan and
CanaDAM thereafter. The author also is indebted to funding from the
NSF (grant DMS-1344994 of the RTG in Algebra, Algebraic Geometry, and
Number Theory, at the University of Georgia), PIMS, the Simons Center,
and the AMS through which he was able to introduce his work to
others. The author would like to thank Gary Iliev greatly for bringing
reference~\cite{Rensburg2000} to his attention.

The author is grateful for conversations with Julien Courtiel,
Elizabeth Denne, Chris Duffy, Chaim Even-Zohar, \'Eric Fusy, Rafa\l{}
Komendarczyk, Neal Madras, Kenneth Millett, Marni Mishna, Erik Panzer,
Jason Parsley, Eric Rawdon, Andrew Rechnitzer, Clayton Shonkwiler,
Chris Soteros, and Karen Yeats.

An extended abstract for this paper appears in the proceedings for
FPSAC 2016.

\begingroup
\raggedright{}
\sloppy
\printbibliography{}
\endgroup

\end{document}